\numberwithin{equation}{section}
\newtheorem{Thm}[equation]{Theorem}
\newtheorem{Prop}[equation]{Proposition}
\newtheorem{Cor}[equation]{Corollary}
\theoremstyle{definition}
\newtheorem{Def}[equation]{Definition}
\newtheorem{Cons}[equation]{Construction}
\theoremstyle{remark}
\newtheorem{Rem}[equation]{Remark}
\newtheorem{Not}[equation]{Notation}
\newtheorem{Exa}[equation]{Example}
\newcommand{\nc}{\newcommand}
\nc{\dmo}{\DeclareMathOperator}
\dmo{\End}{End}
\dmo{\Hom}{Hom}
\dmo{\Ker}{Ker}
\dmo{\Pic}{Pic}
\dmo{\Res}{Res}
\dmo{\modname}{mod}
\dmo{\projname}{proj}
\dmo{\rmH}{H}
\dmo{\stabname}{stab}
\dmo{\torsname}{Tors}
\dmo{\gps}{gps}
\nc{\Cont}{\mathrm{Cont}} 
\nc{\Mid}{\,\big|\,}
\nc{\Quil}{\mathcal{S}_p}
\nc{\QG}{\Quil (G)}
\nc{\QP}{\Quil (P)}
\nc{\SET}[2]{\big\{\,#1\Mid#2\,\big\}}
\nc{\bbC}{\mathbb{C}}
\nc{\bbF}{\mathbb{F}}
\nc{\bbQ}{\mathbb{Q}}
\nc{\bbR}{\mathbb{R}}
\nc{\bbS}{\mathbb{S}}
\nc{\bbZ}{\mathbb{Z}}
\nc{\eg}{{\sl e.g.}}
\nc{\eps}{\epsilon}
\nc{\ie}{{\sl i.e.}\ }
\nc{\inv}{^{-1}}
\nc{\isoto}{\overset{\sim}{\,\to\,}}
\nc{\isotoo}{\overset{\sim}{\,\too\,}}
\nc{\ith}{^\textrm{th}}
\nc{\kk}{\Bbbk}
\nc{\mapstoo}{\longmapsto}
\nc{\onto}{\mathop{\twoheadrightarrow}}
\nc{\otoo}[1]{\overset{#1}{\,\too\,}}
\nc{\oto}[1]{\overset{#1}\to}
\nc{\potimes}[1]{^{\otimes #1}}
\nc{\qquadtext}[1]{\qquad\textrm{#1}\qquad}
\nc{\restr}[1]{_{|_{\scriptstyle #1}}}
\nc{\too}{\mathop{\longrightarrow}\limits}
\nc{\unit}{\mathbb{1}}
\nc{\Aname}{A}
\nc{\ACGP}{\Aname_{\bbC}(G,P)}
\nc{\mACGP}{\Tors{m}{\ACGP}}
\nc{\AGP}{\Aname_{\kk}(G,P)}
\nc{\Tk}{\mathrm{T}_{\kk}}
\nc{\TGP}{\Tk(G,P)}
\nc{\bbL}{\mathbb{L}}
\nc{\Tors}[2]{\torsname_{#1}{#2}}
\begin{document}


\title[Endotrivial representations and line bundles on~$\QG$]{endotrivial representations of finite groups and \\ equivariant line bundles on the Brown complex}
\author{Paul Balmer}
\date{\today}

\address{Paul Balmer, Mathematics Department, UCLA, Los Angeles, CA 90095-1555, USA}
\email{balmer@math.ucla.edu}
\urladdr{http://www.math.ucla.edu/$\sim$balmer}

\begin{abstract}
We relate endotrivial representations of a finite group in characteristic~$p$ to equivariant line bundles on the simplicial complex of non-trivial $p$-subgroups, by means of weak homomorphisms.
\end{abstract}

\subjclass[2010]{20C20,55N91} \keywords{Endotrivial module, equivariant line bundle, Brown Quillen complex of $p$-subgroups, weak homomorphism}

\thanks{Supported by NSF grant~DMS-1303073 and Research Award of the Humboldt Foundation.}

\maketitle

\begin{center}
\textit{Dedicated to Serge Bouc on the occasion of his 60$\ith$ birthday}
\end{center}

\bigbreak
\section{Introduction}

Let $G$ be a finite group, $p$ a prime dividing the order of~$G$ and $\kk$ a field of characteristic~$p$. For the whole paper, we fix a Sylow $p$-subgroup~$P$ of~$G$.

\medbreak

Consider the \emph{endotrivial} $\kk G$-modules $M$, \ie those finite dimensional $\kk$-linear representations $M$ of~$G$ which are $\otimes$-invertible in the stable category $\kk G\,\text{--}\stabname=\kk G\,\text{--}\modname/\,\kk G\,\text{--}\projname$; this means that the $\kk G$-module $\End_{\kk}(M)$ is isomorphic to the trivial module~$\kk$ plus projective summands. The stable isomorphism classes of these endotrivial modules form an abelian group, $\Tk(G)$, under tensor product. This important invariant has been fully described for $p$-groups in celebrated work of Carlson and Th\'evenaz~\cite{CarlsonThevenaz04,CarlsonThevenaz05}. Therefore, for general finite groups~$G$, the focus has moved towards studying the relative version:
\[
\TGP :=\Ker\big(\Tk(G)\to \Tk(P)\big)\,.
\]

We connect this piece of modular representation theory to the equivariant topology of the \emph{Brown complex $\QG$ of $p$-subgroups}, see~\cite{Brown75}. This \mbox{$G$-space~$\QG$} is the simplicial complex associated to the poset of nontrivial $p$-subgroups of~$G$, on which $G$ acts by conjugation. The study of~$\QG$ is a major topic in group theory, centered around Quillen's conjecture~\cite{Quillen78}, which predicts that if $\QG$ is contractible then it is $G$-contractible, \ie $G$ admits a non-trivial normal $p$-subgroup. Here, we focus on the Picard group~$\Pic^G(\QG)$ of $G$-equivariant complex line bundles on~$\QG$; see Segal~\cite{Segal68}.

Our main result, Theorem~\ref{thm:main}, relates those two theories as follows (see Cor.\,\ref{cor:main}):
\begin{Thm}
\label{thm:main-intro}%
Suppose $\kk$ algebraically closed. Then there exists an isomorphism
\[
\TGP \simeq \ \Tors{p'}{\Pic^G(\QG)}
\]
where $\Tors{p'}{\Pic^G(\QG)}$ is the prime-to-$p$ torsion subgroup of~$\Pic^G(\QG)$. 
\end{Thm}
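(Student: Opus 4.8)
The plan is to pass through the group $\ACGP$ of $\bbC^{\times}$-valued weak $P$-homomorphisms and to establish the chain of isomorphisms
\[
\TGP\ \simeq\ \AGP\ \simeq\ \ACGP\ \simeq\ \Tors{p'}{\Pic^G(\QG)}.
\]
The first isomorphism is the description of the relative endotrivial group by weak homomorphisms. For the second I would use that $\AGP$ is automatically a $p'$-group, since $\kk^{\times}$ has no $p$-torsion, so that — $\kk$ being algebraically closed — fixing an isomorphism between the prime-to-$p$ roots of unity in $\kk$ and in $\bbC$ turns a weak $\kk^{\times}$-homomorphism into a weak $\bbC^{\times}$-homomorphism and vice versa. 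Thus everything reduces to proving $\ACGP\simeq\Tors{p'}{\Pic^G(\QG)}$, which is what the rest of the plan addresses.

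First I would build a comparison homomorphism $\Phi\colon\ACGP\to\Pic^G(\QG)$. A simplex of $\QG$ is a chain $\sigma=(Q_0<\dots<Q_n)$ of nontrivial $p$-subgroups; its stabilizer $N_G(\sigma)=\bigcap_iN_G(Q_i)$ contains the \emph{nontrivial} normal $p$-subgroup~$Q_0$. The defining relations of a weak $P$-homomorphism $u$ then force $u|_{N_G(\sigma)}$ to be an honest homomorphism $N_G(\sigma)\to\bbC^{\times}$, precisely because on a subgroup normalizing a common nontrivial $p$-subgroup the weak-homomorphism condition collapses to ordinary multiplicativity. These characters are compatible with face maps and with $G$-conjugation, so — each simplex being contractible — they glue to a $G$-equivariant line bundle $L_u$; this is $\Phi(u)$, and as $L_u$ is assembled from $p'$-valued cocycles, $\Phi$ takes values in $\Tors{p'}{\Pic^G(\QG)}$. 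I expect $\Phi$ to be injective for group-theoretic reasons: a weak $P$-homomorphism is determined by its restrictions to the normalizers $N_G(Q)$ of nontrivial $p$-subgroups, so $L_u$ equivariantly trivial forces $u\equiv 1$.

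For surjectivity, restricting a $p'$-torsion equivariant line bundle $L$ to the (contractible) simplices yields a compatible family of characters $\chi_\sigma\colon N_G(\sigma)\to\bbC^{\times}$, necessarily $p'$-valued; the task is to realize such a family by a weak homomorphism. I would analyze this via the isotropy (Leray) spectral sequence of $EG\times_G\QG\to\QG/G$ converging to $\rmH^{*}_G(\QG;\bbZ)\cong\Pic^G(\QG)$ — the last identification via the exponential sequence, the sheaf of continuous $\bbC$-valued functions being $G$-acyclic. Since the stabilizers are finite, $\rmH^1(N_G(\sigma);\bbZ)=0$ and $\rmH^2(N_G(\sigma);\bbZ)=\Hom(N_G(\sigma),\bbC^{\times})$, which leaves only the short exact sequence
\[
0\ \too\ \rmH^2(\QG/G;\bbZ)\ \too\ \Pic^G(\QG)\ \too\ E_\infty^{0,2}\ \too\ 0,
\]
in which $E_\infty^{0,2}$ is the subgroup of the group of compatible stabilizer characters cut out by a single differential $d_3$ into $\rmH^3(\QG/G;\bbZ)$. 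It then remains to check: (i) that $\Phi$ identifies $\ACGP$ with the prime-to-$p$ part of $E_\infty^{0,2}$, i.e. that the weak-homomorphism axioms amount exactly to compatibility of the $\chi_\sigma$ together with vanishing of $d_3$ on the corresponding class; and (ii) that the ``topological'' summand $\rmH^2(\QG/G;\bbZ)$ carries no prime-to-$p$ torsion, so that $\Tors{p'}{\Pic^G(\QG)}$ is precisely this $p'$-part.

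The main obstacle is exactly (ii), together with the precise matching in (i): both must exploit features special to the Brown complex — that every simplex stabilizer contains a nontrivial normal $p$-subgroup, so that the $p$-subgroups of the poset contribute nothing to cohomology with prime-to-$p$ coefficients, and the Quillen--Webb acyclicity properties of $\QG$ and of its orbit space. Showing that the prime-to-$p$ torsion of $\Pic^G(\QG)$ is entirely ``local'', detected on the normalizers $N_G(Q)$, and then recognizing that local data as weak homomorphisms, is the heart of the matter and the point where the weak-homomorphism formalism is used in full.
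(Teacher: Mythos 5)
Your high-level plan agrees with the paper's: pass through the group $\ACGP$ of complex-valued weak $P$-homomorphisms, build a comparison map into $\Pic^G(\QG)$, and match $\kk$-valued with $\bbC$-valued roots of unity using $\kk=\bar\kk$. However there are several genuine gaps, and the single most important ingredient is missing.

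\textbf{The construction of $\Phi$ is flawed as stated.} You claim that for a simplex $\sigma=(Q_0<\dots<Q_n)$ the restriction $u\restr{N_G(\sigma)}$ is an honest character because $N_G(\sigma)$ normalizes the nontrivial $p$-group $Q_0$. But (WH\,3) gives multiplicativity $u(g_2g_1)=u(g_2)u(g_1)$ only when $P\cap P^{g_1}\cap P^{g_2g_1}\neq1$; for $g_1,g_2\in N_G(Q_0)$ this intersection contains $Q_0$ only if $Q_0\leq P$. So the restriction argument works for simplices of $\QP$, and fails for a general simplex of $\QG$ — one can easily produce $g_1,g_2\in N_G(Q)$ for $Q\not\leq P$ where the triple intersection of Sylows is trivial and (WH\,3) is silent. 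Extending the local characters coherently from $\QP$ to all of $\QG$ is precisely the nontrivial gluing; the paper performs it over the closed cover $(Ys)_{s\in G}$ of $\QG$ with transition functions $u(st^{-1})$, and it is exactly here that the weak-homomorphism axioms are used. The relatedness to your plan is that the paper's trivializations are $G$-coherent, $g_*(\alpha_s)=\alpha_{sg}$, so they do encode a compatible family of local characters — but the indexing by $G$ (rather than by simplices) is what makes the coherence automatic.

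\textbf{Injectivity: ``group-theoretic reasons'' is not an argument and the statement as you write it is false without an extra input.} If $L_u$ is $G$-equivariantly trivial, a trivializing isomorphism $\psi$ need not agree on the nose with the canonical trivializations over $Y$ and its translates; one gets a discrepancy function $\delta:Y\to\bbC^*$ with $u(g)=\delta(y)/\delta(yg^{-1})$ on $Y\cap Yg$, and there is no reason yet for this to be $1$. The paper closes this gap by observing that $\delta^m$ glues to a $G$-map $f:\QG\to\bbC^*$ and then extracting an $m$-th root of $f$ in $\Cont_G(\QG,\bbC^*)$. This extraction is possible precisely because $\QG/G$ is contractible — Symonds's theorem \cite{Symonds98} — which is an essential external input you do not invoke.

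\textbf{Surjectivity: you have correctly located the obstruction, but it is exactly Symonds's theorem that removes it.} In your spectral sequence for $\rmH^*_G(\QG;\bbZ)$, the worry is whether $\rmH^2(\QG/G;\bbZ)$ has prime-to-$p$ torsion and whether the $d_3$ into $\rmH^3(\QG/G;\bbZ)$ vanishes. Since $\QG/G$ is contractible (Symonds), both groups vanish outright, $E_\infty^{0,2}=E_2^{0,2}$, and $\Pic^G(\QG)$ is identified with compatible families of stabilizer characters. Without this input your step (ii) is not a calculation one can carry out by hand, and indeed you label it as the ``main obstacle.'' Note also that the paper avoids the spectral sequence altogether: it constructs the candidate weak homomorphism $u_L$ directly by comparing translated trivializations of $L$ on $Y\cap Yg$, using the $P$-contractibility of $Y=\QP$ to normalize the trivialization so that $\beta^{\otimes m}=\omega\restr{Y}$; this is more elementary than the spectral-sequence route, though the latter could in principle be carried through once one has Symonds.

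\textbf{A minor but real imprecision:} the middle arrow in your chain should be $\AGP\simeq\Tors{p'}{\ACGP}$, not $\AGP\simeq\ACGP$. Over $\bbC$ the group $\ACGP$ need not be torsion, and could a priori also have $p$-power torsion, so only after passing to the prime-to-$p$ torsion subgroup does the dictionary with $\kk$-valued weak homomorphisms apply.

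In short: the strategy is the right one, but the heart of both the injectivity and the surjectivity is the contractibility of the orbit space $\QG/G$, and this is a serious theorem (Symonds) that your proposal does not supply or even cite; the stabilizer-character construction of $\Phi$ also needs the extra care indicated above.
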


The left-hand abelian group $\TGP$ is always finite; see Remark~\ref{rem:bar-k}. About the right-hand side, it is true for general finite $G$-CW-complex~$X$ that the group $\Pic^G(X)$ can be interpreted as an equivariant cohomology group, namely $\rmH^2_G(X,\bbZ)$; in particular it is a finitely generated abelien group; see Remark~\ref{rem:Burt}. Some readers will consider Theorem~\ref{thm:main-intro} as the topological answer to the modular-representation-theoretic problem of computing~$\TGP$.

Since its origin in~\cite{Brown75,Quillen78}, the space $\QG$ is related to the $p$-local study of~$G$. Closer to our specific subject, Kn\"orr and Robinson in~\cite{KnoerrRobinson89} and Th\'evenaz in~\cite{Thevenaz93} already exhibited interesting relations between modular representation theory and equivariant K-theory of~$\QG$. The connection we propose here does not only relate \emph{invariants} of both worlds but appears at a slightly deeper level, in that it connects actual objects. Indeed, in Construction~\ref{cons:L}, we build complex line bundles over~$\QG$ from endotrivial representations of~$G$. This construction then yields the isomorphism of Theorem~\ref{thm:main-intro}. It would actually be interesting to see whether similar constructions exist for other classes of modular representations of~$G$, beyond endotrivial ones. 

The attentive reader will appreciate that modular representations of~$G$ live in positive characteristic whereas complex line bundles on the space~$\QG$ are rather ``characteristic zero" objects. This cross-characteristic connection is made possible thanks to the use of torsion elements and roots of unity. More precisely, we use in a crucial way the re-interpretation~\cite{Balmer13b} of the group~$\TGP$ in terms of \emph{weak $P$-homomorphisms}. Let us remind the reader.
\begin{Def}
\label{def:weak-hom}%
Let $K$ be a field -- which will be either $\kk$ or~$\bbC$ in the sequel. A function $u:G\too K^*=K\!-\!\{0\}$ is a \emph{($K$-valued) weak $P$-homomorphism} if
\begin{enumerate}[(WH\,1)]
\item
\label{it:WH1}%
\quad $u(g)=1$ when $g\in P$.
\smallbreak
\item
\label{it:WH2}%
\quad $u(g)=1$ if $P\cap P^g=1$.
\smallbreak
\item
\label{it:WH3}%
\quad $u(g_2\,g_1)=u(g_2)\,u(g_1)$ if $P\cap P^{g_1}\cap P^{g_2g_1}\neq 1$.
\end{enumerate}
The name comes from (WH\,\ref{it:WH3}) which is a weakening of the usual homomorphism condition. We denote by~$\textrm{A}_K(G,P)$ the group of all weak $P$-homomorphisms from~$G$ to~$K^*$, equipped with pointwise multiplication: $(uv)(g)=u(g)\,v(g)$.
\end{Def}

The main result of~\cite{Balmer13b} is the existence of an explicit isomorphism
\begin{equation}
\label{eq:AT}%
\AGP \simeq \TGP\,.
\end{equation}
This result has already found interesting applications, for instance the computation of~$\TGP$ for new classes of groups by Carlson-Mazza-Nakano~\cite{CarlsonMazzaNakano14}~and Carlson-Th\'evenaz~\cite{CarlsonThevenaz15}. Here, we will use the complex version $\ACGP$ to build a homomorphism
\[
\bbL:\ACGP\to \Pic^G(\QG)
\]
which will yield the isomorphism of Theorem~\ref{thm:main-intro} when suitably restricted to torsion. Injectivity of~$\bbL$ on torsion relies in an essential way on a result of Symonds~\cite{Symonds98}, namely the contractibility of the orbit space~$\QG/G$.

As often in such matters, it is difficult to predict which way traffic will go on the new bridge opened by Theorem~\ref{thm:main-intro}. Computations of~$\TGP$ have already been performed for many classes of finite groups and it seems quite possible that these examples will produce new equivariant line bundles for people interested in the $G$-homotopy type of~$\QG$. Conversely, Theorem~\ref{thm:main-intro} might prove useful to modular representation theorists in endotrivial need. Only future work will tell.

Finally, we emphasize that the $G$-space $\QG$ can of course be replaced by any $G$-homotopically equivalent $G$-space, like Quillen's version~\cite{Quillen78} via elementary abelian $p$-subgroups, Bouc's variant~\cite{Bouc84}, or Robinson's, see Webb~\cite{Webb87}.

\section{The Brown complex and roots of functions}
\label{sec:setup}%

In this preparatory section, we gather some background and notation.

\begin{Not}
For an integer $m\geq 1$ and a field~$K$ (which will be $\kk$ or $\bbC$), we denote by~$\mu_m(K)=\SET{\zeta\in K}{\zeta^m=1}$ the group of $m\ith$ roots of unity in~$K$.
\end{Not}

\begin{Not}
\label{not:QG-Y}%
The Brown complex $\QG$ is the simplical complex with one non-degenerate $n$-simplex $[Q_0< Q_1 < \cdots < Q_n]$ for each sequence of $n$ proper inclusions of nontrivial $p$-subgroups, with the usual face-operations ``dropping~$Q_i$''. For \mbox{$n=0$}, we thus have a point $[Q]$ in~$\QG$ for each non-trivial $p$-subgroup~$Q\le G$. The space~$\QG$ admits an obvious \emph{right $G$-action} given by conjugation on the $p$-subgroups, that is $Q\cdot g:=Q^g=g\inv Q g$. This $G$-action on~$\QG$ is compatible with the cell structure.

Since we have fixed a Sylow $p$-subgroup $P\le G$, we can consider the subcomplex
\[
Y := \QP \,\subseteq\, \QG
\]
on those subgroups contained in~$P$, \ie we keep in~$Y$ those $n$-cells $[Q_0< \cdots < Q_n]$ of~$\QG$ corresponding to non-trivial subgroups of~$P$. This closed subspace $Y$ of~$\QG$ is contractible, for instance towards the point~$[P]$. But more than that, $Y$ is an $N$-subspace of~$\QG$ for $N=N_G(P)$ the normalizer of~$P$. As such, $Y$ is even $N$-contractible. See~\cite{ThevenazWebb91} if necessary. A fortiori, $Y$ is $P$-contractible. The translates $Yg=\Quil(P^g)$ of the closed subspace~$Y$ cover the space~$\QG$:
\[
 \QG = \cup_{g\in G} \ \Quil(P^g) = \cup_{g\in G} \ Yg.
\]
We shall perform several ``$G$-equivariant constructions'' over~$\QG$ by first performing a basic construction over~$Y$ and then showing that the translates of this basic construction on $Yg_1$ and on~$Yg_2$ agree on the intersection $Yg_1\cap Yg_2$ for all~$g_1,g_2$.
\end{Not}

\begin{Rem}
\label{rem:non-trivial}%
We will be tacitly using the following fact. For $g_1,\ldots, g_n\in G$ (typically with $n\le 3$), we have $P^{g_1}\cap \cdots \cap P^{g_n}\neq 1$ if and only if $Y{g_1}\cap \cdots \cap Y{g_n}$ is not empty. Clearly a nontrivial $P^{g_1}\cap \cdots \cap P^{g_n}$ gives a point in $Y{g_1}\cap \cdots \cap Y{g_n}$. Conversely, as $G$ acts simplicially on~$\QG$, a non-empty intersection $Y{g_1}\cap \cdots \cap Y{g_n}$ must contain some 0-simplex~$[Q]$, \ie some nontrivial $p$-subgroup $Q\le P^{g_i}$ for all~$i$.
\end{Rem}

We shall also often use the following standard notation:

\begin{Not}
\label{not:cdot-u}%
When $\lambda: L_1\to L_2$ is a map of complex line bundles on a space~$X$ and $\eps:X\to \bbC^*$ is a continuous function, we denote by~$\lambda\cdot\eps$ the map~$\lambda$ composed with the automorphism (of~$L_1$ or~$L_2$) which scales by~$\eps(x)$ the fiber over~$x$.
\end{Not}

\begin{Rem}
\label{rem:Pic}%
A $G$-equivariant complex line bundle $L$ over a (right) $G$-space~$X$ consists of a complex line bundle $\pi:L\to X$ such that $L$ is also equipped with a $G$-action making $\pi$ equivariant and such that the action of every~$g\in G$ on fibers $L_x\to L_{xg}$ is $\bbC$-linear. More generally, see~\cite{Segal68} for $G$-equivariant vector bundles. We denote by $\Pic^G(X)$ the group of $G$-equivariant isomorphism classes of such~$L$, equipped with tensor product. The contravariant functor $\Pic^G(-)$ is invariant under $G$-homotopy. In particular, if $X$ is $G$-equivariantly contractible, the map $\Hom_{\gps}(G,\bbC^*)\cong\Pic^G(\ast)\too\Pic^G(X)$ is an isomorphism.

In the case of $X=\QG$, restriction to the $P$-subspace~$Y=\Quil(P)$ yields a group homomorphism from $\Pic^G(\QG)$ to the one-dimensional complex representations of~$P$, that we shall simply denote by~$\Res^G_P$
\begin{equation}
\label{eq:Res-Pic}%
\Res^G_P:\Pic^G(\QG)\to \Pic^P(\QP)\cong\Hom_{\gps}(P,\bbC^*).
\end{equation}
\end{Rem}

\begin{Rem}[Totaro]
\label{rem:Burt}%
For a compact Lie group~$G$ acting on a manifold~$M$, there is an isomorphism~$\Pic^G(M)\simeq \rmH^2_G(M,\bbZ)=\rmH^2(M\times_G E G,\bbZ)$, where $E G\to B G$ is the universal $G$-principal bundle on the classifying space~$BG$; see~\cite[Thm.\,C.47]{GuilleminGinzburgKarshon02}, where the similar result for a finite group acting on a finite CW-complex is attributed to~\cite{HattoriYoshida76}. Alternatively, one can see the latter by reducing to the case of manifolds, since every finite $G$-CW-complex is $G$-homotopy equivalent to a (noncompact) $G$-manifold. Then the group~$\rmH^2(X\times_G EG,\bbZ)$ can be approached via a Serre spectral sequence for the fibration $X\to X\times_G EG \to BG$. In particular, using that $G$ is finite, the spectral sequence collapses rationally to an isomorphism $\rmH^2(X\times_G EG,\bbQ)\simeq \rmH^0(BG,\rmH^2(X,\bbQ))$ showing that $\Pic^G(X)\otimes\bbQ \simeq (\Pic(X)\otimes\bbQ)^G$.
\end{Rem}

\begin{Not}
\label{not:g_*}%
For a subspace $Y$ of a $G$-space~$X$, like our $Y=\QP\subseteq\QG=X$, every element~$g\in G$ yields a homeomorphism $\cdot g: Y\isoto Yg$. We can transport things from~$Y$ to~$Yg$ via this homeomorphism, and we use $g_*(-)$ to denote this idea. For instance, if $f:Y\to \bbC$ is a function, then $g_*f:Yg\to \bbC$ is $g_*f(x):=f(xg\inv)$. Another situation will be that of $G$-equivariant line bundles~$L\oto{\pi} X$ and $L'\oto{\pi'} X$ and a morphism $\lambda:L\restr{Y}\to L'\restr{Y}$ of bundles over~$Y$, in which case the morphism $g_*\lambda:L\restr{Yg}\to L'\restr{Yg}$ is defined by the commutativity of the following top face:
\begin{equation}
\label{eq:g_*(lambda)}%
\vcenter{\xymatrix@C=2em@R=1.5em{
L\restr{Y} \ar[rd]^-{\lambda} \ar[rdd]_-{\pi}
 \ar[rrr]^-{\cdot g}_-{\simeq}
&&& L\restr{Yg} \ar[rd]^(.6){=:\displaystyle g_*(\lambda)} \ar[rdd]_(.7){\pi}|-{\hole}
\\
& L'\restr{Y} \ar[rrr]^-{\cdot g}_-{\simeq} \ar[d]^-{\pi'}
&&& L'\restr{Yg} \ar[d]^-{\pi'}
\\
& Y \ar[rrr]^-{\cdot g}_-{\simeq}
&&& Yg\,.\kern-.5em
}}
\end{equation}
As we use \emph{right} actions (that is $(\cdot g_2g_1)=(\cdot g_1)\circ(\cdot g_2)$) we have $(g_2g_1)_*=(g_1)_*\circ (g_2)_*$.
\end{Not}

Let us now say a word of roots of complex functions.
\begin{Rem}
\label{rem:C-triv}%
Throughout the paper,~$\bbC$ is given the trivial $G$-action. Hence a $G$-map $f:X\to \bbC$ from a (right) $G$-space~$X$ to~$\bbC$ is simply a continuous function such that $f(xg)=f(x)$ for all~$x\in X$ and all~$g\in G$, that is essentially a continuous function $\bar f:X/G\to \bbC$ on the orbit space.
\end{Rem}

\begin{Prop}
\label{prop:root}%
Let $m\geq1$ be an integer, $X$ a $G$-space and $f:X\to \bbC^*$ a $G$-map. Suppose that $f$ is $G$-homotopic to the constant map~$1$. Then $f$ admits an $m\ith$ root in~$\Cont_G(X,\bbC^*)$, \ie a $G$-map $f^{1/m}:X\to \bbC^*$ such that $(f^{1/m})^m=f$.
\end{Prop}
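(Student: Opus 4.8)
The plan is to reduce the existence of an equivariant $m^{\mathrm{th}}$ root to the contractibility hypothesis via a standard covering-space / lifting argument, but carried out $G$-equivariantly. Consider the $m^{\mathrm{th}}$ power map $p_m:\bbC^*\to\bbC^*$, $z\mapsto z^m$; this is a covering map, and if we give $\bbC^*$ the trivial $G$-action on both source and target, then $p_m$ is $G$-equivariant. Finding $f^{1/m}$ amounts to lifting the $G$-map $f:X\to\bbC^*$ through $p_m$. The obstruction to such a lift living in $[X,\bbC^*]$ is detected by the induced map on fundamental groups (equivalently, on first cohomology / winding numbers), and the hypothesis that $f$ is $G$-homotopic to the constant map $1$ kills exactly this obstruction.

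Here is how I would carry it out concretely. First, exploit Remark~\ref{rem:C-triv}: since $\bbC$ carries the trivial $G$-action, a $G$-map $X\to\bbC^*$ is the same thing as an ordinary continuous map $\bar f:X/G\to\bbC^*$ on the orbit space, and a $G$-homotopy between two such is the same as an ordinary homotopy on $X/G$. So the statement is equivalent to: if $\bar f:X/G\to\bbC^*$ is null-homotopic (homotopic to the constant $1$), then $\bar f$ admits an $m^{\mathrm{th}}$ root in $\Cont(X/G,\bbC^*)$. Now pick a homotopy $H:X/G\times[0,1]\to\bbC^*$ with $H_0\equiv 1$ and $H_1=\bar f$. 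Since $H_0$ is constant, it visibly has an $m^{\mathrm{th}}$ root, namely the constant map $1$ (or any fixed $m^{\mathrm{th}}$ root of $1$). Then apply the homotopy lifting property of the covering $p_m:\bbC^*\to\bbC^*$ to the homotopy $H$, starting from this chosen lift of $H_0$; we obtain a lift $\widetilde H:X/G\times[0,1]\to\bbC^*$ with $p_m\circ\widetilde H=H$. Setting $g:=\widetilde H_1:X/G\to\bbC^*$ gives $g^m=H_1=\bar f$. Transporting back along Remark~\ref{rem:C-triv}, $g$ corresponds to a $G$-map $f^{1/m}:X\to\bbC^*$ with $(f^{1/m})^m=f$, as desired.

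The one point that needs a small amount of care is the homotopy lifting property itself. If $X/G$ (equivalently $X$, as a $G$-CW-complex with $\bbC$ given the trivial action so only the orbit type matters here) is a reasonably nice space — a CW-complex, which is the case for $\QG$ and its relatives — then $p_m$, being a covering map, has the homotopy lifting property with respect to all such spaces, so there is no issue. If one wants to avoid any hypothesis on $X$ beyond being a $G$-space, one can instead argue directly: $\bbC^*\simeq S^1$, so write $\bbC^*=\bbR_{>0}\times S^1$ and note the radial factor is contractible and irrelevant; on the $S^1=\bbR/\bbZ$ factor, $p_m$ is the quotient $\bbR/\bbZ\to\bbR/\bbZ$ by $t\mapsto mt$, and lifting a null-homotopic map $\bar f:X/G\to S^1$ through it is equivalent to lifting $\bar f$ to $\bbR$ (which exists since $\bar f$ is null-homotopic, hence factors up to homotopy through the universal cover, and a genuine lift to $\bbR$ can be built from the null-homotopy by the same path-lifting argument) and then pushing the $\bbR$-lift back down. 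I expect the main obstacle — such as it is — to be purely expository: making sure the identification ``$G$-map to $\bbC^*$ with trivial action $=$ map on $X/G$'' is invoked cleanly at the start and at the end, so that the entire argument becomes a one-line application of covering-space theory rather than an equivariant construction. There is no genuine equivariance difficulty precisely because the target has trivial action.
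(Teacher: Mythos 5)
Your proposal is correct and takes essentially the same approach as the paper: both pass to the orbit space $X/G$ via the trivial action on $\bbC^*$, then extract the $m\ith$ root by lifting the null-homotopy along a covering (the paper writes this via the fibration $\bbR\onto\bbS^1$ and an explicit logarithm; your primary variant lifts directly through $z\mapsto z^m$, and your fallback is the paper's version). Your worry about homotopy lifting needing $X/G$ to be a CW-complex is unnecessary, since covering maps have the homotopy lifting property with respect to arbitrary spaces; this is why the paper does not impose any cellular hypothesis either.
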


\begin{proof}
By assumption, the induced map $\bar f:X/G\to \bbC^*$ is homotopic to~$1$. Then it suffices to observe that $\bar f$ has an $m\ith$ root by a standard determination-of-the-logarithm argument. (Let $\bar X=X/G$ and let $H:\bar X\times[0,1]\to \bbC^*$ be a homotopy between $H(x,0)=\bar f(x)$ and $H(x,1)=1$. Lifting each $t\mapsto H(x,t)/|H(x,t)|\in \bbS^1$ along the fibration $\bbR\onto \bbS^1$, we find a map $\theta:\bar X\times [0,1]\to \bbR$ such that $H(x,t)=|H(x,t)|\cdot e^{i\theta(x,t)}$ and $\theta(x,1)=0$. One can then define the $m\ith$ root of~$\bar f$ via $\bar f^{1/m}(x)=|\bar f(x)|^{1/m}\cdot e^{i\theta(x,0)/m}$ for all $x\in\bar X$.)
\end{proof}

\begin{Cor}
\label{cor:root}%
If $X/G$ is contractible (\eg\ if~$X$ is $G$-contractible) then for every integer~$m\geq 1$, every $G$-map $f:X\to \bbC^*$ admits an $m\ith$ root $f^{1/m}\in\Cont_G(X,\bbC^*)$.
\end{Cor}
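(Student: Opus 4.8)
The plan is to reduce Corollary~\ref{cor:root} directly to Proposition~\ref{prop:root} by showing that the only extra hypothesis of the Proposition---namely that $f$ be $G$-homotopic to the constant map~$1$---is automatically satisfied once $X/G$ is contractible. Recall from Remark~\ref{rem:C-triv} that a $G$-map $f:X\to\bbC^*$ is the same datum as a continuous function $\bar f:X/G\to\bbC^*$, and that a $G$-homotopy $X\times[0,1]\to\bbC^*$ (with trivial $G$-action on the interval) is the same as an ordinary homotopy $(X/G)\times[0,1]\to\bbC^*$. So the whole statement takes place downstairs on the orbit space.

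First I would fix an integer $m\geq1$ and an arbitrary $G$-map $f:X\to\bbC^*$, and pass to $\bar f:X/G\to\bbC^*$. Since $X/G$ is contractible, there is a homotopy $G_0:(X/G)\times[0,1]\to X/G$ with $G_0(-,0)=\mathrm{id}$ and $G_0(-,1)$ the constant map at some basepoint~$x_0$. Composing with $\bar f$ gives a homotopy $H:=\bar f\circ G_0:(X/G)\times[0,1]\to\bbC^*$ from $\bar f$ to the constant map with value $\bar f(x_0)\in\bbC^*$. Multiplying $H$ by the constant $\bar f(x_0)\inv$ (which does not affect continuity and is compatible with the endpoints up to that same constant), or simply post-composing with the path from $\bar f(x_0)$ to~$1$ inside the connected group $\bbC^*$ and concatenating, yields a homotopy from $\bar f$ to the constant map~$1$. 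Translating back up, $f$ is $G$-homotopic to the constant map~$1$.

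Now Proposition~\ref{prop:root} applies verbatim and produces the desired $m\ith$ root $f^{1/m}\in\Cont_G(X,\bbC^*)$. For the parenthetical ``e.g.\ if $X$ is $G$-contractible'' I would note that a $G$-contraction of~$X$ descends to a contraction of the orbit space $X/G$ (the orbit-space functor sends a $G$-homotopy to a homotopy, by Remark~\ref{rem:C-triv} again, applied to the relevant maps), so $G$-contractibility of~$X$ indeed implies contractibility of~$X/G$, and the hypothesis of the Corollary is the weaker of the two.

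There is essentially no obstacle here: the corollary is a formal consequence of the proposition, and the only thing to check carefully is the bookkeeping that ``$G$-homotopy of $G$-maps to $\bbC^*$'' matches ``ordinary homotopy on the orbit space,'' which is exactly the content of Remark~\ref{rem:C-triv}. The one place warranting a word of care is that $X/G$ contractible gives a homotopy of $\bar f$ to a \emph{constant of possibly nonzero value}, not to~$1$; but since $\bbC^*$ is path-connected one concatenates with a path in~$\bbC^*$ from that value to~$1$ to conclude. This is genuinely routine, so the proof should be a two- or three-line reduction.
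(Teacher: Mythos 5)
Your proposal is correct and takes essentially the same route as the paper: factor $f$ through $X/G$, use contractibility of the orbit space to see that $\bar f$ is null-homotopic (hence $f$ is $G$-homotopically trivial), and then invoke Proposition~\ref{prop:root}. One small remark: of your two suggested fixes for the constant being $\bar f(x_0)$ rather than~$1$, only the concatenation with a path in~$\bbC^*$ gives a homotopy with the correct starting endpoint $\bar f$ (multiplying $H$ by $\bar f(x_0)\inv$ changes the start to $\bar f\cdot\bar f(x_0)\inv$); this is the detail the paper leaves implicit, and your second suggestion handles it correctly.
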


\begin{proof}
As such a map $f$ factors via $X\onto X/G$, the contractibility of~$X/G$ implies that $f$ is $G$-homotopically trivial and we conclude by Proposition~\ref{prop:root}.
\end{proof}

\begin{Cor}
\label{cor:Symonds}%
For every integer~$m\geq 1$, every $G$-map $f:\QG\to \bbC^*$ on the Brown complex admits an $m\ith$ root $f^{1/m}\in\Cont_G(\QG,\bbC^*)$.
\end{Cor}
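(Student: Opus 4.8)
The plan is to deduce Corollary~\ref{cor:Symonds} immediately from Corollary~\ref{cor:root} by supplying the one geometric input that is missing, namely that the orbit space $\QG/G$ is contractible. This is exactly the theorem of Symonds~\cite{Symonds98} alluded to in the introduction: the orbit space of the Brown complex of $p$-subgroups under the conjugation action of~$G$ is contractible. (One may equally cite the $G$-homotopy equivalent models, Quillen's complex of elementary abelian $p$-subgroups or Bouc's variant, for which the same statement holds.) So the entire argument is: invoke Symonds to get $\QG/G$ contractible, then apply Corollary~\ref{cor:root} with $X=\QG$.

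Concretely, I would write: by \cite{Symonds98} the orbit space $\QG/G$ is contractible. Hence Corollary~\ref{cor:root}, applied to the $G$-space $X=\QG$ and the given integer~$m$, provides for every $G$-map $f:\QG\to\bbC^*$ an $m\ith$ root $f^{1/m}\in\Cont_G(\QG,\bbC^*)$ with $(f^{1/m})^m=f$. That is all that is required.

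There is essentially no obstacle internal to this proof — it is a two-line citation-plus-application. The only point worth a moment's care is making sure the hypothesis of Corollary~\ref{cor:root} is genuinely met: Corollary~\ref{cor:root} asks for $X/G$ contractible (not merely $X$ being $G$-contractible, which is false here since $\QG$ is typically not even contractible), and Symonds' theorem delivers precisely that weaker but sufficient statement about the orbit space. So the ``hard part'' has already been done by Symonds; from the vantage point of this paper the result is a formal corollary. I would therefore keep the proof to a single sentence, with the reference to~\cite{Symonds98} carrying the weight.
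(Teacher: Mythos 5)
Your proof is correct and is exactly the paper's proof: cite Symonds~\cite{Symonds98} for the contractibility of $\QG/G$ and then apply Corollary~\ref{cor:root}. Nothing more to add.
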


\begin{proof}
The orbit space $\QG/G$ is contractible by Symonds~\cite{Symonds98}.
\end{proof}

\section{Constructing line bundles from weak homomorphisms}
\label{sec:u-to-L}%

We now want to associate a $G$-equivariant complex line bundle $L_u$ on~$\QG$ to each complex-valued weak homomorphism $u\in \ACGP$ as in Definition~\ref{def:weak-hom}. In essence, this is a very standard gluing procedure, familiar to every geometer. We spell out some details for the sake of clarity and to see where the ``weak homomorphism'' conditions~(WH\,\ref{it:WH1}-\ref{it:WH3}) show up.

\begin{Cons}
\label{cons:L}%
Let $u:G\to \bbC^*$ be a weak $P$-homomorphism and $Y=\QP\subseteq\QG$ as in Notation~\ref{not:QG-Y}. Define $L_u$ as the following topological space:
{\setlength{\fboxsep}{1em}
\[
\boxed{L_u:=\Big(\bigsqcup_{s\in G}\ Ys\times \bbC\ \Big)_{\displaystyle\big/\sim}}
\]
}%
where $\sim$ is the equivalence relation defined in~\eqref{eq:sim} below. We use the notation $(y,a)_s$ to indicate a point $(y,a)$ in the space~$Ys\times \bbC$ with index~$s\in G$; and we shall write $[y,a]_s\in L_u$ for its class modulo~$\sim$. (As the subsets $Ys$ do intersect in~$\QG$, the lighter notation $(y,a)$ would be ambiguous.) Note that the weak $P$-homomorphism~$u$ does not appear so far; it is used in the equivalence relation:
\begin{equation}
\label{eq:sim}%
 (y,a)_s\sim (z,b)_t
 \qquadtext{iff}
 \left\{
 \begin{array}{l}
  y=z
  \\
  \ \textrm{and}
  \\
  a\cdot u(s\,t\inv)=b.
 \end{array}
 \right.
\end{equation}
One easily verifies that~$\sim$ is indeed an equivalence relation, using~(WH\,\ref{it:WH3}). Of course, $L_u$ is equipped with the quotient topology.
\end{Cons}

\begin{Rem}
\label{rem:mnemo}%
A good way to keep track of what happens is to think of the class $[y,a]_s$ as a fictional element ``$a\cdot s\in \bbC$ living in a fiber over~$y\in\QG$", which is not defined since we do not know how $s\in G$ should act on~$\bbC$. Still, equality between ``$a\cdot s$ over~$y$" and ``$b\cdot t$ over~$z$" should nonetheless mean that they live in the same fiber, \ie $y=z$, and that ``$a\cdot (st\inv)=b$''. So we decide that the action of $st\inv$, \ie the \emph{difference} of the two actions over the point $y=z$ in~$Ys\cap Yt$, is given via the weak homomorphism~$u$. This can be compared to~\cite[Eq.\,(2.7)]{Balmer13b}.
\end{Rem}

The space~$L_u$ admits a continuous projection to the Brown complex
\[
\pi_u:L_u \to \QG
\]
simply given by $[y,a]_s \mapsto y$ and whose fibers are isomorphic to~$\bbC$. More precisely, for every $s\in G$, we have a homeomorphism
\begin{equation}
\label{eq:alpha}%
\xymatrix@R=.1em{
\alpha_s:
&&
\kern-4em \unit_{Ys}:=Ys \times \bbC \ar[r]^-{\simeq}
& \pi_u\inv(Ys)\subseteq L_u\kern-2em
\\
&& (y,a) \ar@{|->}[r]
& [y,a]_s
}
\end{equation}
(We denote trivial line bundles by~$\unit$.) These are \emph{trivializations} of~$L_u$ over~$Ys$. For all $s,t\in G$, the transition maps~$\alpha_t\inv\alpha_s$ on the intersection
\[
\xymatrix@R=.2em{
(Ys\cap Yt) \times \bbC \ar[r]^-{\alpha_s}_-{\simeq}
& \pi_u\inv(Ys\cap Yt)
& (Ys\cap Yt) \times \bbC \ar[l]_-{\alpha_t}^-{\simeq}
\\
(y,a) \ar@{|->}[r]
& [y,a]_s \overset{\textrm{\eqref{eq:sim}}}= [y,a\cdot u(s\,t\inv)]_t\ar@{|->}[r]
& (y,a\cdot u(s\,t\inv))}
\]
is given by the (constant) linear isomorphism, multiplication by the unit~$u(s\,t\inv)$. In other words, $L_u\otoo{\pi_u} \QG$ is a complex line bundle on~$\QG$. We record the above computation in compact form\,: for all $s,t\in G$ we have an equality
\begin{equation}
\label{eq:trans-alpha}%
\alpha_s=\alpha_t\cdot u(s\,t\inv)\qquad\textrm{over }Ys\cap Yt
\end{equation}
as isomorphisms $\unit_{Ys\cap Yt}\isoto (L_u)\restr{Ys\cap Yt}$. Here we used Notation~\ref{not:cdot-u}.

\smallbreak

The right $G$-action on the space~$L_u$ is defined, in the spirit of Remark~\ref{rem:mnemo}, by
\[
[y,a]_s\cdot g:=[yg,a]_{sg}.
\]
This action clearly makes~$\pi_u:L_u\to \QG$ into a $G$-map. In view of the above, $G$ acts linearly on the fibers of~$\pi_u$ and thus makes~$L_u$ into a $G$-equivariant complex line bundle over~$\QG$. We can also observe that the collection of local trivializations $\alpha_s:\unit_{Ys} \isotoo (L_u)\restr{Ys}$ given in~\eqref{eq:alpha} is ``\emph{$G$-coherent}" (\footnote{\,We do not say ``$G$-equivariant'' to avoid confusion.}) by which we mean that for all $s,g\in G$ we have
\begin{equation}
\label{eq:g_*(alpha)}%
g_*(\alpha_s)=\alpha_{sg}
\end{equation}
as isomorphisms $\unit_{Ysg}\isoto (L_u)\restr{Ysg}$. This fact results directly from the definitions, see~\eqref{eq:g_*(lambda)} and~\eqref{eq:alpha}. Combining this with~\eqref{eq:trans-alpha} we note for later use the formula:
\begin{equation}
\label{eq:g_*(alpha_1)}%
g_*(\alpha_1)=\alpha_1\cdot u(g)\qquad\textrm{over }Y\cap Yg
\end{equation}
as isomorphisms $\unit_{Y\cap Yg}\isoto (L_u)\restr{Y\cap Yg}$, for all $g\in G$ such that $P\cap P^g\neq1$.

\begin{Prop}
\label{prop:L_uv}%
For any two weak $P$-homomorphisms $u,v\in \ACGP$ we have a $G$-equivariant isomorphism $L_{uv}\simeq L_u\otimes L_v$ of complex line bundles over~$\QG$.
\end{Prop}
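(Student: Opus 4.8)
The plan is to construct the isomorphism $L_{uv} \simeq L_u \otimes L_v$ by comparing local trivializations. Recall that $L_u \otimes L_v$ comes with local trivializations $\alpha_s^u \otimes \alpha_s^v : \unit_{Ys} \isoto (L_u \otimes L_v)\restr{Ys}$ over each $Ys$, whose transition maps over $Ys \cap Yt$ are multiplication by $u(st\inv)v(st\inv) = (uv)(st\inv)$ — using that pointwise multiplication is exactly how the group $\ACGP$ is defined. On the other hand $L_{uv}$ has trivializations $\alpha_s^{uv}$ with transition maps also given by multiplication by $(uv)(st\inv)$, by \eqref{eq:trans-alpha}. So the two bundles have literally the same transition cocycle with respect to the same open cover $\{Ys\}_{s \in G}$, and the standard gluing yields a canonical isomorphism.

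Concretely, I would define $\varphi : L_{uv} \to L_u \otimes L_v$ on each chart by $\varphi\restr{Ys} := (\alpha_s^u \otimes \alpha_s^v) \circ (\alpha_s^{uv})\inv$. The first step is to check this is well-defined, i.e.\ that the definitions on $Ys$ and on $Yt$ agree over the overlap $Ys \cap Yt$: this is precisely the computation $(\alpha_s^u \otimes \alpha_s^v)(\alpha_s^{uv})\inv = (\alpha_t^u \otimes \alpha_t^v)(\alpha_t^{uv})\inv$ over $Ys \cap Yt$, which follows by inserting \eqref{eq:trans-alpha} for $u$, $v$, and $uv$ and cancelling the scalar $(uv)(st\inv)$. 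Since each $\varphi\restr{Ys}$ is a fiberwise-linear homeomorphism onto $(L_u\otimes L_v)\restr{Ys}$ and these are compatible over overlaps, $\varphi$ is an isomorphism of complex line bundles over~$\QG$.

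The remaining step — and the only place one must be slightly careful — is $G$-equivariance of~$\varphi$. Here I would invoke the $G$-coherence property \eqref{eq:g_*(alpha)}: $g_*(\alpha_s^u) = \alpha_{sg}^u$ and likewise for $v$ and $uv$ (and $g_*$ commutes with $\otimes$, being transport along the homeomorphism $\cdot g$). Applying $g_*$ to $\varphi\restr{Ys}$ thus gives $g_*(\varphi\restr{Ys}) = (\alpha_{sg}^u \otimes \alpha_{sg}^v)(\alpha_{sg}^{uv})\inv = \varphi\restr{Ysg}$, which is exactly the statement that $\varphi$ commutes with the $G$-action (every point of $\QG$ lies in some $Ysg$, and $\varphi$ is determined chart by chart). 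I do not expect a genuine obstacle here; the main point to get right is simply bookkeeping of which scalar $u(st\inv)$ versus $u(ts\inv)$ appears, and confirming that the $\otimes$ of line bundles multiplies transition functions in the way that matches pointwise multiplication in $\ACGP$ — both are routine once the conventions of \eqref{eq:trans-alpha} and \eqref{eq:g_*(alpha)} are in hand.
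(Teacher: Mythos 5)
Your proof is correct and follows essentially the same route as the paper: compare the local trivializations $\alpha_s^{(u)}\otimes\alpha_s^{(v)}$ and $\alpha_s^{(uv)}$ over the common cover $\{Ys\}_{s\in G}$, observe the transition cocycles agree since $u(st\inv)\,v(st\inv)=(uv)(st\inv)$, and deduce $G$-equivariance from the $G$-coherence formula~\eqref{eq:g_*(alpha)}. One small slip worth flagging: the $Ys$ form a \emph{closed} cover by subcomplexes, not an open cover; this does not affect the patching argument here, but the paper is careful to call it a closed cover.
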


\begin{proof}
Note that the trivializations~\eqref{eq:alpha} of~$L_u$ are performed on the closed cover of~$\QG$ given by~$(Ys)_{s\in G}$, which is independent of~$u$. So, it is the same cover for~$L_u$, $L_v$ and $L_{uv}$. The statement then follows from the observation that the following obvious isomorphisms over~$Ys$ (where we temporarily decorate the three morphisms~$\alpha$ as $\alpha^{(u)}$, $\alpha^{(v)}$ and $\alpha^{(uv)}$ to distinguish the respective line bundles)
\[
\xymatrix{
(L_u\otimes L_v)\restr{Ys}
\cong (L_u)\restr{Ys}\otimes (L_v)\restr{Ys}
&& \unit_{Ys}\otimes\unit_{Ys} \cong \unit_{Ys}
 \ar[ll]^-{\simeq}_-{\alpha_s^{(u)}\otimes\alpha_s^{(v)}} \ar[r]^-{\alpha_s^{(uv)}}
& (L_{uv})\restr{Ys}
}
\]
patch together into a $G$-equivariant isomorphism $L_u\otimes L_v\isoto L_{uv}$ on~$\QG$. Verification of this patching is immediate from~\eqref{eq:trans-alpha} and the following agreement:
\[
\xymatrix{
\unit_{Ys\cap Yt} \otimes \unit_{Ys\cap Yt}
\cong \unit_{Ys\cap Yt}
 \ar@<-2.3em>[d]|-{(\cdot u(st\inv))\otimes (\cdot v(st\inv))} \ar@<3em>[d]^-{\cdot uv(st\inv)}
\\
\unit_{Ys\cap Yt} \otimes \unit_{Ys\cap Yt}
\cong \unit_{Ys\cap Yt}
}
\]
on the trivial bundle. Finally, the map $L_u\otimes L_v\isoto L_{uv}$ is $G$-equivariant because each $\{\alpha_s^{(...)}\}_{s\in G}$ is a $G$-coherent collection of maps, as we saw in~\eqref{eq:g_*(alpha)}.
\end{proof}

The reader will easily verify the following naturality of Construction~\ref{cons:L}.

\begin{Prop}
\label{prop:nat}%
Let $G'\le G$ be a subgroup containing~$P$ and consider the $G'$-subspace $\Quil(G')\subseteq \Quil(G)$. Then the following diagram
\[
\xymatrix{
\ACGP \ar[r]^-{\bbL} \ar[d]_-{\Res}
& \Pic^G(\QG) \ar[d]^-{\Res}
\\
\Aname_{\bbC}(G',P) \ar[r]^-{\bbL}
& \Pic^{G'}(\Quil(G'))
}
\]
is commutative.
\qed
\end{Prop}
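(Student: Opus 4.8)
The plan is to unwind both functors in the statement along the closed cover $(Ys)_{s\in G}$ of $\Quil(G)$ and check that the construction $\bbL$ is built entirely out of data — the subspaces $Ys$, the trivializations $\alpha_s$, the transition functions $u(st\inv)$ — that is insensitive to whether we regard everything over $G$ or over the subgroup $G'\le G$. The key observation, which I would state first, is that since $P\le G'$, the $G'$-space $\Quil(G')$ is covered by exactly the same translates $\{Ys\}_{s\in G'}$ of $Y=\Quil(P)$ as appear in Construction~\ref{cons:L}, now indexed by $s\in G'$ rather than $s\in G$; indeed $Ys=\Quil(P^s)$ and $P^s\le G'$ for $s\in G'$. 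Moreover the restriction $\Quil(G')\cap Yt$ for $t\in G'$ is computed inside $\Quil(G')$ exactly as inside $\Quil(G)$, because $\Quil(G')$ is a full subcomplex (a cell of $\Quil(G)$ lies in $\Quil(G')$ as soon as all its vertices do). So there is no subtlety about the cover changing shape under restriction.

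Next I would make precise what the two vertical maps do on representatives. On the left, $\Res\colon\ACGP\to\Aname_{\bbC}(G',P)$ is literally restriction of functions $u\colon G\to\bbC^*$ to $G'\to\bbC^*$; one checks trivially that the three conditions (WH\,\ref{it:WH1})–(WH\,\ref{it:WH3}) for $u|_{G'}$ follow from those for $u$, since $P\cap P^{g_1}\cap P^{g_2g_1}$ is unchanged by passing to $G'$. On the right, $\Res\colon\Pic^G(\Quil(G))\to\Pic^{G'}(\Quil(G'))$ is the composite of restricting the $G$-action to $G'$ and restricting the bundle along the inclusion $\Quil(G')\hookrightarrow\Quil(G)$. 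With these descriptions in hand, the proof is the comparison $\Res(L_u)\cong L_{u|_{G'}}$ as $G'$-equivariant line bundles on $\Quil(G')$. I would exhibit this isomorphism on the cover: the trivializations $\alpha_s\colon\unit_{Ys}\isoto(L_u)|_{Ys}$ of~\eqref{eq:alpha}, for $s$ ranging over $G'\subseteq G$, restrict to trivializations of $\Res(L_u)$ over $\Quil(G')$, and their transition functions over $Ys\cap Yt\cap\Quil(G')$ are multiplication by $u(st\inv)=(u|_{G'})(st\inv)$ by~\eqref{eq:trans-alpha}; these are precisely the defining transition data of $L_{u|_{G'}}$ by Construction~\ref{cons:L} applied to $(G',P)$. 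Hence the identity maps on each $\unit_{Ys}$ patch to a canonical isomorphism $\Res(L_u)\isoto L_{u|_{G'}}$ of line bundles, and $G'$-equivariance is immediate from the $G$-coherence formula $g_*(\alpha_s)=\alpha_{sg}$ of~\eqref{eq:g_*(alpha)}, now read only for $g,s\in G'$, together with the definition of the $G'$-action $[y,a]_s\cdot g=[yg,a]_{sg}$.

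I expect the only genuine point requiring care — the ``main obstacle'', mild as it is — to be the verification that the cover $\{Ys\}$ and its pairwise intersections behave well under restriction to $\Quil(G')$: concretely, that $Ys\cap\Quil(G')=Ys$ for $s\in G'$ and, more importantly, that for $s,t\in G'$ one has $(Ys\cap Yt)\cap\Quil(G')=Ys\cap Yt$, so that no gluing datum is lost or altered. This is where the hypothesis $P\le G'$ is used, via $P^s\le G'$, and where one invokes Remark~\ref{rem:non-trivial} in the form: $Ys\cap Yt\neq\emptyset$ iff $P^s\cap P^t\neq 1$, a condition internal to $G'$. Once this is noted, everything else is a diagram-chase already performed in the proof of Proposition~\ref{prop:L_uv} — one is simply restricting the index set of the cover from $G$ to $G'$ — and the asserted square commutes on the nose.
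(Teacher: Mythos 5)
Your argument is correct and is exactly the routine verification that the paper leaves to the reader (the proposition is stated with \qed and the remark that ``the reader will easily verify''): for $s,t\in G'$ one has $Ys,Yt\subseteq\Quil(G')$, the cover $\{Ys\}_{s\in G'}$ of $\Quil(G')$ carries the same trivializations $\alpha_s$ with the same transition data $u(st\inv)=(u|_{G'})(st\inv)$, and the $G'$-coherence $g_*(\alpha_s)=\alpha_{sg}$ for $g,s\in G'$ gives equivariance, yielding $\Res(L_u)\cong L_{u|_{G'}}$. Nothing to add.
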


\begin{Exa}
\label{exa:trivial}%
Let $u:G\to \bbC^*$ be a group homomorphism, \ie a one-dimensional representation. Assume that $u$ is trivial on~$P$. One associates to $u$ a weak $P$-homomorphism $\tilde u\in\ACGP$ by forcing~(WH\,\ref{it:WH2}), \ie by setting for every $g\in G$
\begin{equation}
\label{eq:tilde}%
\tilde u(g):=
\left\{
\begin{array}{cl}
u(g) & \textrm{ if }P\cap P^g\neq 1
\\
1 & \textrm{ if }P\cap P^g = 1.
\end{array}
\right.
\end{equation}
Then $L_{\tilde u}$ is isomorphic to the ``constant'' line bundle (in the sense of~\cite{Segal68}), that is, the line bundle $\unit_{u}:=\QG\times \bbC$ with action $(y,a)\cdot g=(yg,au(g))$. Indeed, inspired by Remark~\ref{rem:mnemo}, one easily guesses the $G$-equivariant isomorphism $L_{\tilde u}\isoto \unit_{u}$ by sending the class $[y,a]_s$ in $L_{\tilde u}$ (see Construction~\ref{cons:L}) to the point $(y,a\cdot u(s))$ in~$\QG\times\bbC=\unit_{u}$. Verifications are left to the reader.
\end{Exa}

The modification~\eqref{eq:tilde} of~$u$ into a weak homomorphism~$\tilde u$ is irrelevant for the construction of~$L_{\tilde u}$ since~\eqref{eq:sim} only uses values $\tilde u(g)$ over the subset~$Y\cap Yg$. Indeed, either $P\cap P^g=1$ and this subset is empty, or $P\cap P^g\neq1$ and $\tilde u(g)=u(g)$ anyway. Furthermore, the homomorphism $u\mapsto \tilde u$ is often injective, even after (post-) composition with~$\bbL$. We do not use the latter but state it for peace of mind:

\begin{Prop}
Suppose that $\QG$ is connected. Let $u:G\to \bbC^*$ be a group homomorphism which is trivial on~$P$ and such that the $G$-equivariant line bundle $\unit_u\simeq\bbL(\tilde u)$ is $G$-equivariantly trivial on~$\QG$ (for instance if $\tilde u=1$). Then $u=1$.
\end{Prop}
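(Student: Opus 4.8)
The plan is to argue that the one-dimensional representation $u$ is not merely trivial on the fixed Sylow subgroup $P$, but in fact trivial on the subgroup $H$ of $G$ generated by all the conjugates $P^g$ for which $P\cap P^g\neq 1$; and then to show that the connectedness of $\QG$ forces $H$ to be the whole of $G$ (at least modulo~$\Ker u$), so that $u=1$.

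\emph{Step 1: extract a cocycle equation from triviality of $\bbL(\tilde u)$.}
A $G$-equivariant trivialization of $\unit_u\simeq\bbL(\tilde u)$ on~$\QG$ is, unwinding Construction~\ref{cons:L} and Example~\ref{exa:trivial}, a $G$-map $\phi:\QG\to\bbC^*$, \ie a continuous function with $\phi(yg)=\phi(y)\,u(g)\inv$ (the twist coming from the action on~$\unit_u$). Wait — since $\bbC$ carries the trivial $G$-action in this paper only for genuine $G$-maps, here the equivariance of the trivialization $\unit\isoto\unit_u$ of line bundles reads $\phi(yg)=\phi(y)u(g)$ for all $y\in\QG$, $g\in G$. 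Now $\QG$ is assumed connected, and for a \emph{connected} space any continuous $\bbC^*$-valued function restricted to the point $[P]$ and to its $G$-translates must be compatible: evaluating the relation at $y=[P]$ gives $\phi([P]g)=\phi([P])\,u(g)$. The first obstacle, which I expect to be the technical heart, is to leverage connectedness to say more, since a priori $\phi$ can vary along~$\QG$.

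\emph{Step 2: use the local trivializations to pin down $u$ on overlaps.}
Rather than work with an arbitrary trivialization, I would use the $G$-coherent local trivializations $\alpha_s:\unit_{Ys}\isoto(L_{\tilde u})\restr{Ys}$ of~\eqref{eq:alpha}. A global $G$-equivariant trivialization $L_{\tilde u}\isoto\unit$ restricts over each $Ys$ to $\unit_{Ys}\isoto\unit_{Ys}$, \ie to multiplication by a function $\eps_s:Ys\to\bbC^*$, and the transition formula~\eqref{eq:trans-alpha} together with $G$-coherence~\eqref{eq:g_*(alpha)} forces, over $Ys\cap Yt$, the relation $\eps_s=\eps_t\cdot\tilde u(st\inv)$, while $G$-equivariance forces $g_*\eps_s=\eps_{sg}$. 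Since each $Ys=\Quil(P^s)$ is contractible, each $\eps_s$ is homotopic to a constant, and by the connectedness of $\QG$ one can normalize: pick $\eps_1$ to be the constant~$1$ on $Y=\Quil(P)$ (using $Y$-contractibility, cf.\ Remark~\ref{rem:Pic}), whence $g_*\eps_1=\eps_g$ over $Yg$. The equation over $Y\cap Yg$ (nonempty iff $P\cap P^g\neq1$, Remark~\ref{rem:non-trivial}) then reads, via~\eqref{eq:g_*(alpha_1)}, $\eps_g\restr{Y\cap Yg}=\tilde u(g)\inv=u(g)\inv$ — a constant. So for every $g$ with $P\cap P^g\neq1$, the function $\eps_g$ on the contractible $Yg$ takes the constant value $u(g)\inv$ on the (possibly tiny) overlap $Y\cap Yg$; I then propagate this along chains of overlapping translates using connectedness of $\QG$.

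\emph{Step 3: connectedness forces $u=1$.}
Here is where $\QG$ connected is essential: any two translates $Y=Y1$ and $Yg$ are joined by a finite chain $Y=Yg_0, Yg_1,\ldots,Yg_n=Yg$ with consecutive overlaps $Yg_{i-1}\cap Yg_i\neq\emptyset$ (this is a standard nerve argument, since the $Ys$ form a closed cover of the connected complex $\QG$ by contractible pieces and their pairwise intersections detect connectivity of the nerve, which is $G$-homotopy equivalent to $\QG$). Feeding consecutive overlap relations $\eps_{g_{i-1}}=\eps_{g_i}\cdot\tilde u(g_i g_{i-1}\inv)$ and tracking constants, one gets $\eps_g=\eps_1\cdot\prod_i \tilde u(g_ig_{i-1}\inv)=\prod_i u(g_ig_{i-1}\inv)$, because every factor $g_ig_{i-1}\inv$ satisfies $P^{g_{i-1}}\cap P^{g_i}\neq1$, equivalently $P\cap P^{g_ig_{i-1}\inv}\neq1$, so $\tilde u=u$ there and (WH\,\ref{it:WH3}) lets the telescoping product collapse to $u(g)$. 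Thus $\eps_g$ is the \emph{constant} $u(g)$ on all of $Yg$, for \emph{every} $g\in G$. But global continuity of the trivialization $L_{\tilde u}\isoto\unit$ on $\QG$ means the $\eps_g$ agree on overlaps: taking $g$ and $g'$ with $Yg\cap Yg'\neq\emptyset$ gives $u(g)=u(g')$, i.e.\ $u(g'g\inv)=1$ whenever $P^g\cap P^{g'}\neq1$; running the same chain argument from $Y=Y1$ to an arbitrary $Yg$ yields $u(g)=u(1)=1$. Hence $u=1$, as claimed. The one step requiring genuine care is the nerve/chain argument in Step~3: one must check that connectedness of $\QG$ really does give a chain of \emph{pairwise-intersecting} translates $Ys$ — which follows because each $Ys$ is connected (even contractible) and their union is $\QG$, so the "intersection graph" on $\{Ys\}_{s\in G}$ is connected.
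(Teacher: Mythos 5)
Your proposal departs from the paper's proof in a way that introduces a genuine gap. The paper's proof takes the $G$-equivariant trivialization to be multiplication by a single global function $f:\QG\to\bbC^*$ satisfying $f(xg)=f(x)u(g)$, exploits that $u$ has finite order to see that $f^m$ is a genuine $G$-map, invokes Corollary~\ref{cor:Symonds} (which rests on Symonds' theorem that $\QG/G$ is contractible) to produce a $G$-map $\hat f$ with $\hat f^m=f^m$, and then uses connectedness of $\QG$ to conclude $\hat f/f$ is a constant root of unity, so that $f$ itself is a $G$-map and $u=1$.

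Your Step~2 normalization is the problem. You write ``pick $\eps_1$ to be the constant~$1$ on $Y$,'' citing $Y$-contractibility. But $\eps_1=\psi\restr{Y}\circ\alpha_1^{-1}$ is determined once the $G$-equivariant trivialization $\psi:\unit_{\QG}\isoto L_{\tilde u}$ is fixed; the only freedom is to replace $\psi$ by $\psi\cdot\hat\eps$ for some $\hat\eps\in\Cont_G(\QG,\bbC^*)$, and achieving $\eps_1\equiv 1$ would require $\eps_1^{-1}$, which is merely a $P$-map on $Y$ nullhomotopic in $\Cont(Y,\bbC^*)$, to extend to a $G$-map on all of $\QG$. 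That extendability is \emph{exactly} what needs proving, and it is precisely the step the paper buys with Symonds' contractibility of $\QG/G$ plus the $m$-th root trick: the paper shows $f$ differs from a $G$-map by a constant, which is equivalent to the normalization you assume. Without that input, the $\eps_g$ are not constant, and your Step~3 chain argument (which is fine in itself, and indeed shows the nerve of the cover $\{Ys\}$ is connected) has nothing to propagate. So the proposal, as written, assumes what it needs to prove; to repair it you would have to re-introduce the Symonds/root-of-unity argument, after which your chain argument becomes unnecessary, since one can then conclude as the paper does.
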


\begin{proof}
A $G$-equivariant isomorphism $\unit\isoto \unit_u$ is given by multiplication by a map $f:\QG\to \bbC^*$ such that $f(xg)=f(x)\cdot u(g)$ for all~$g\in G$ and $x\in \QG$. Choose an integer $m\geq 1$ such that $u(g)^m=1$. Then $f^m:\QG\to \bbC^*$ is a $G$-map. By Corollary~\ref{cor:Symonds}, this $f^m$ admits an $m\ith$ root in~$\Cont_G(\QG,\bbC^*)$, \ie there exists a $G$-map $\hat f:\QG\to \bbC^*$ such that $\hat f^m=f^m$. Since~$\QG$ is assumed connected, we have $\hat f=f\cdot \rho$ for some constant $\rho\in \mu_m(\bbC)$. Then $f$ is also a $G$-map and the above relation $f(xg)=f(x)\cdot u(g)$ forces $u(g)=1$ for all~$g\in G$.
\end{proof}

Assuming~$\QG$ connected is a mild condition. According to~\cite[Prop.\,5.2]{Quillen78}, if~$\QG$ is disconnected then the stabilizer $H\le G$ of a component is a strongly $p$-embedded subgroup, and our discussion can be safely reduced from~$G$ to~$H$.

\section{The results}
\label{sec:results}%

We now prove our main result, from which we will deduce Theorem~\ref{thm:main-intro} stated in the Introduction.  We saw in Proposition~\ref{prop:L_uv} that the assignment $u\mapsto L_u$ of Construction~\ref{cons:L} induces a well-defined homomorphism $\bbL:\ACGP\to \Pic^G(\QG)$ from the group of complex-valued weak $P$-homomorphisms (Def.\,\ref{def:weak-hom}) to the $G$-equivariant Picard group (Rem.\,\ref{rem:Pic}) of the Brown complex~$\QG$.

\begin{Thm}
\label{thm:main}%
The homomorphism $\bbL:\ACGP\to \Pic^G(\QG)$ is injective on torsion subgroups (denoted $\torsname$) and its image is detected by restriction to one-dimensional representations of~$P$, see~\eqref{eq:Res-Pic}. That is, we have an exact sequence
\begin{equation}
\label{eq:main}%
\xymatrix{
0 \ar[r]
& \Tors{}{\ACGP} \ar[r]^-{\bbL}
& \Tors{}{\Pic^G(\QG)} \ar[r]^-{\Res^G_P}
& \Hom_{\gps}(P,\bbC^*)\,.}
\end{equation}
Consequently, for every integer $m\geq 1$ prime to~$p$, our~$\bbL$ restricts to an isomorphism on the $m$-torsion subgroups\,(\,\footnote{\,By ``$m$-torsion" we mean exactly the annihilator of~$m$ itself, not of powers of~$m$.})
\[
\bbL:\mACGP \isoto \Tors{m}{\Pic^G(\QG)}.
\]
\end{Thm}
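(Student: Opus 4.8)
The plan is to establish the exact sequence~\eqref{eq:main} and then read off the isomorphism on $m$-torsion for $m$ prime to~$p$. I would organize the argument in three parts: (1) the composite $\Res^G_P\circ\bbL$ vanishes on all of~$\ACGP$; (2) $\bbL$ is injective on torsion; (3) the image of $\Tors{}{\bbL}$ is exactly the kernel of $\Res^G_P$ on torsion.

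For part (1), the key observation is that the restriction $\bbL(u)\restr{Y}=(L_u)\restr{Y}$ is trivialized by $\alpha_1$ from~\eqref{eq:alpha}, so $\Res^G_P\bbL(u)$ is the trivial one-dimensional $P$-representation; here one only needs~(WH\,\ref{it:WH1}), which guarantees $\alpha_1$ is $P$-coherent over~$Y$ via~\eqref{eq:g_*(alpha_1)} (for $g\in P$ we have $u(g)=1$, so the $P$-action on the trivialized bundle is trivial). For part (2), suppose $u\in\Tors{}{\ACGP}$, say $u^m=1$ with $\bbL(u)$ $G$-equivariantly trivial. A $G$-equivariant trivialization is a nowhere-zero section, which in the trivialization~$\alpha_1$ over~$Y$ is a function $f:Y\to\bbC^*$; transporting it around by the $G$-action and comparing on overlaps $Y\cap Yg$ via~\eqref{eq:trans-alpha} and~\eqref{eq:g_*(alpha_1)}, the equivariance of the section forces a cocycle-type relation that essentially says $f$ restricted to suitable overlaps recovers the values $u(g)$; since $f$ exists globally over $\QG$ one produces, after taking $m$-th powers and using Corollary~\ref{cor:Symonds} to extract an $m$-th root, a genuine $G$-map from which one deduces $u=1$. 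This is modeled on the final Proposition in the excerpt, but now without assuming $u$ is a homomorphism.

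For part (3), take $\ell\in\Tors{}{\Pic^G(\QG)}$ with $\Res^G_P(\ell)=1$; I want to produce $u\in\ACGP$ with $\bbL(u)\simeq\ell$. Pick a representative line bundle~$L$. Since $L\restr{Y}$ is $P$-equivariantly trivial (its class is $\Res^G_P(\ell)=1$ in $\Pic^P(Y)\cong\Hom_{\gps}(P,\bbC^*)$, using that $Y$ is $P$-contractible), choose a $P$-equivariant trivialization $\beta:\unit_Y\isoto L\restr{Y}$. For each $s\in G$ transport it to $Ys$ by setting $\beta_s:=s_*(\beta)$; on an overlap $Ys\cap Yt$ (nonempty iff $P^s\cap P^t\neq1$, Remark~\ref{rem:non-trivial}) the two trivializations differ by multiplication by a continuous function $Ys\cap Yt\to\bbC^*$, and I claim this function is the \emph{constant} $u(st\inv)$ for a well-defined $u:G\to\bbC^*$ satisfying~(WH\,1--3): constancy and the cocycle relation~(WH\,\ref{it:WH3}) come from the usual transition-function identities $\beta_s\beta_t\inv\cdot\beta_t\beta_r\inv=\beta_s\beta_r\inv$ together with $P$-equivariance of~$\beta$ forcing $g_*(\beta)=\beta$ over~$Y$; (WH\,\ref{it:WH1}) is this same $P$-equivariance; (WH\,\ref{it:WH2}) is vacuous since the overlap is empty. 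By construction $L\simeq L_u$, i.e.\ $\ell=\bbL(u)$, and $u$ is torsion because $\ell$ is (matching orders via injectivity on torsion from part (2), or directly). Finally, for $m$ prime to~$p$: the restriction map in~\eqref{eq:main} lands in $\Hom_{\gps}(P,\bbC^*)$, whose $m$-torsion is trivial because $P$ is a $p$-group and $\gcd(m,p)=1$ (any $m$-th root of unity character of a $p$-group is trivial); hence on $m$-torsion the sequence~\eqref{eq:main} collapses to the asserted isomorphism $\bbL:\mACGP\isoto\Tors{m}{\Pic^G(\QG)}$.

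The main obstacle I anticipate is part (3), the surjectivity onto $\Ker(\Res^G_P)$: one must check carefully that the pairwise transition functions obtained from $s_*(\beta)$ are genuinely \emph{constant} and assemble into a single well-defined function $u$ on~$G$ (not just on pairs $(s,t)$), which is exactly where $P$-equivariance of $\beta$ and the compatibility $(g_2g_1)_*=(g_1)_*(g_2)_*$ must be used in concert; the verification of~(WH\,\ref{it:WH3}) on triple overlaps $Ys\cap Yt\cap Yr$, and the identification $L\simeq L_u$ as $G$-equivariant bundles, is the technical heart. Part (2) is comparatively routine given Corollary~\ref{cor:Symonds}, and part (1) is immediate.
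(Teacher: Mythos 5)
Your parts (1), (2), and the final prime-to-$p$ collapse match the paper's argument in both strategy and substance. Part (1) via the trivialization $\alpha_1$ over $Y$ is a small variant of the paper's use of naturality (Proposition~\ref{prop:nat} with $G'=P$), and either works. Part (2) is the paper's argument: read the global $G$-equivariant trivialization of $L_u$ through $\alpha_1$ to get $\delta$ on $Y$, patch the $m$-th powers $g_*(\delta^m)$ (they agree on overlaps because $u^m=1$) to a $G$-map on $\QG$, take an $m$-th root via Corollary~\ref{cor:Symonds}, compare with $\delta$ up to a constant in $\mu_m(\bbC)$, and conclude $u=1$.

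The gap is in part (3), and it is exactly where you anticipate trouble but then wave at the wrong mechanism. You assert that the transition function on $Ys\cap Yt$ obtained from $s_*(\beta)$ and $t_*(\beta)$ is \emph{constant}, and you claim this ``comes from the usual transition-function identities together with $P$-equivariance of $\beta$.'' That is not so: for an arbitrary $G$-equivariant line bundle $L$ with $\Res^G_P(L)$ trivial, the comparison $g_*(\beta) = \beta\cdot u_L(g)$ defines a continuous function $u_L(g)\colon Y\cap Yg\to\bbC^*$ with no a priori reason to be constant. The cocycle identity and $h_*(\beta)=\beta$ for $h\in P$ give well-definedness modulo $P$ and the relation (WH\,\ref{it:WH3}) on triple overlaps, but they do not pin down $u_L(g)$ pointwise. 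What forces constancy in the paper is the torsion hypothesis, used as follows: one \emph{chooses} a $G$-equivariant trivialization $\omega\colon\unit_{\QG}\isoto L\potimes{m}$, observes that $\omega\restr{Y}$ and $\beta\potimes{m}$ differ by a $P$-map $\eps\colon Y\to\bbC^*$, and then uses Corollary~\ref{cor:root} (applicable because $Y/P$ is contractible) to replace $\beta$ by $\beta\cdot\eps^{1/m}$ so that $\beta\potimes{m}=\omega\restr{Y}$. Taking $m$-th tensor powers of the defining square for $u_L(g)$ and invoking $G$-equivariance of~$\omega$ then yields $(u_L(g))^m=1$ pointwise on $Y\cap Yg$, and only now does the connectedness of $Y\cap Yg$ force $u_L(g)$ to be a constant $m$-th root of unity. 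Without this normalization of $\beta$ against $\omega$ the construction does not produce a weak homomorphism, so this step cannot be left as a ``technical heart'' to be filled in by $P$-equivariance alone; it is where the torsion hypothesis and the second appeal to contractibility (of $Y/P$, not just of $\QG/G$) enter irreducibly.
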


\begin{proof}
The proof will occupy the next couple of pages. First note that by naturality of~$\bbL$ (Prop.\,\ref{prop:nat} applied to $G'=P$), the following square commutes:
\[
\xymatrix{
\ACGP \ar[r]^-{\bbL} \ar[d]_-{\Res}
& \Pic^G(\QG) \ar[d]^-{\Res}
\\
\kern-2em 0= \Aname_{\bbC}(P,P) \ar[r]^-{\bbL}
& \Pic^{P}(\Quil(P)) \cong \Hom_{\gps}(P,\bbC^*)\,.\kern-5em
}
\]
This proves that $\Res^G_P\circ \bbL$ is trivial (even outside torsion).

We now prove injectivity of~$\bbL$ on the torsion of~$\ACGP$. Let $u\in\ACGP$ be an element of $m$-torsion for some~$m\geq 1$, meaning that $u(g)^m=1$ for all~\mbox{$g\in G$}. Suppose that we have a $G$-equivariant trivialization $\psi:\unit_{\QG}\isoto L_u$ of the line bundle $\bbL(u)=L_u$ (see Constr.\,\ref{cons:L}). Comparing the restriction $\psi\restr{Y}$ to the trivialization~$\alpha_1:\unit_Y\isoto (L_u)\restr{Y}$ given in~\eqref{eq:alpha}, we find a $P$-map $\delta:Y\to \bbC^*$ with
\[
\psi\restr{Y}=\alpha_1 \cdot \delta
\]
as isomorphisms $\unit_Y\isoto (L_u)\restr{Y}$. Combining the $G$-equivariance of~$\psi$ with the relation $g_*(\alpha_1)=\alpha_1\cdot u(g)$ on~$Y\cap Yg$ from~\eqref{eq:g_*(alpha_1)}, we see that for every $g\in G$ such that $P\cap P^g\neq 1$, we have for every~$y\in Y\cap Yg$
\begin{equation}
\label{eq:u-delta}%
u(g)=\frac{\delta(y)}{g_*\delta(y)}=\frac{\delta(y)}{\delta(y g\inv)}.
\end{equation}
As the left-hand side belongs to~$\mu_m(\bbC)$, we deduce that $\delta^m$ and $g_*(\delta^m)$ agree on the intersection~$Y\cap Yg$. Consequently the family of functions ~$(g_*(\delta^m))_{g\in G}$ patch together into a $G$-map $f:\QG\to\bbC^*$ by setting $f(x)=\delta(xg\inv)^m$ whenever~$x\in Yg$. By Corollary~\ref{cor:Symonds}, $f$ admits an $m\ith$ root, \ie there exists a $G$-map $f^{1/m}:\QG\to \bbC^*$ such that $(f^{1/m})^m=f$. On~$Y$, the two roots $f^{1/m}$ and~$\delta$ of the same map~$f$ must differ by an $m\ith$ root $\rho\in\mu_m(\bbC)$ which must be constant since~$Y$ is connected, say $\delta=\rho\cdot f$. But then for every $g\in G$ such that $P\cap P^g\neq1$ and for any $y\in Y\cap Yg\neq\varnothing$ (for which $yg\inv\in Y$ too), relation~\eqref{eq:u-delta} becomes
\[
u(g)=\frac{\delta(y)}{\delta(y g\inv)}=\frac{\rho\cdot f(y)}{\rho\cdot f(y g\inv)}=1
\]
by $G$-equivariance of~$f$. In the other case where $P\cap P^g=1$, we have $u(g)=1$ by~(WH\,\ref{it:WH2}). In short, $u=1$ is trivial. This proof uses the contractibility of~$\QG/G$, since Corollary~\ref{cor:Symonds} relies on Symonds~\cite{Symonds98}.

\medbreak

We now prove exactness of~\eqref{eq:main} in the middle via another construction.

\begin{Cons}
\label{cons:u}%
Let~$L$ be a $G$-equivariant complex line bundle on~$\QG$, which is torsion and such that $\Res^G_P(L)=1$, \ie $L$ restricts to the trivial $P$-bundle on~$\QP$. \emph{Choose} for some $m\geq 1$ a $G$-equivariant isomorphism
\[
\omega:\unit_{\QG}\isotoo L\potimes{m}
\]
over~$\QG$ and \emph{choose} a $P$-equivariant isomorphism over~$Y=\QP$
\[
\beta:\unit_Y \ \isotoo \ L\restr{Y}
\]
between the trivial bundle $\unit_Y=Y\times \bbC$ and the restriction of~$L$ to~$Y$. The $P$-equivariance of~$\beta$ means that, for every $h\in P$, we have
\begin{equation}
\label{eq:beta-P-equiv}%
h_*(\beta)=\beta
\end{equation}
as isomorphisms $\unit_Y\isoto L\restr{Y}$. There is a choice in the isomorphism~$\beta$, and we can replace $\beta$ by $\beta\cdot\delta$ for any $P$-map $\delta:Y\to \bbC^*$. We shall use this flexibility shortly.

Observe that $\beta\potimes{m}$ yields another trivialization of~$L\potimes{m}$ on~$Y$, that we can compare to the initial~$\omega$, restricted to~$Y$. It follows that we have $\omega\restr{Y}=\beta\potimes{m}\cdot\eps$ for some $P$-map~$\eps:Y\to \bbC^*$. Since the space $Y$ is $P$-contractible, Corollary~\ref{cor:root} produces an $m$-root of~$\eps$, say $\eps^{1/m}\in\Cont_P(Y,\bbC^*)$ with $(\eps^{1/m})^m=\eps$. Using this unit to replace $\beta$ by~$\beta \cdot \eps^{1/m}$, we can and shall assume that~$\beta:\unit_Y\isoto L\restr{Y}$ moreover satisfies
\begin{equation}
\label{eq:beta-omega}%
\beta\potimes{m}=\omega\restr{Y}.
\end{equation}

Then, for each $g\in G$, consider as before the translate $Yg=\Quil(P^g)\subseteq \QG$ and transport~$\beta$ into an isomorphism $g_*(\beta):\unit_{Yg}\isoto L\restr{Yg}$; see~\eqref{eq:g_*(lambda)}. If the isomorphisms $\beta$ and $g_*(\beta)$ were to agree on the intersection of their domains of definition~$Y\cap Yg$ for all~$g\in G$, the collection of isomorphisms $(g_*(\beta))_{g\in G}$ would patch together into a global isomorphism $\unit_{\QG}\isoto L$, automatically $G$-equivariant by construction. Since this cannot happen for nontrivial~$L$, there is an obstruction, and this happens to be a weak $P$-homomorphism. Indeed, for every $g\in G$ such that $P\cap P^g\neq1$, define what is \textsl{a priori} a function $u_L(g)\in\Cont(Y\cap Yg,\bbC^*)$ by 
\begin{equation}
\label{eq:beta-cocycle}%
g_*(\beta) = \beta \cdot u_L(g)\qquad\textrm{ over }Y\cap Yg
\end{equation}
\ie by the commutativity of the following diagram of line bundles on~$Y\cap Yg$:
\begin{equation}
\label{eq:u-def}%
\vcenter{\xymatrix@C=6em@R=1.5em{
\unit_{Y\cap Yg}
 \ar[r]^-{(g_*(\beta))\restr{Y\cap Yg}}_-{\simeq}
 \ar[d]_-{\cdot u_L(g):=}^-{\simeq}
& (L\restr{Yg})\restr{Y\cap Yg}=L\restr{Y\cap Yg}
 \ar@{=}@<3em>[d]
\\
\unit_{Y\cap Yg}
 \ar[r]^-{\beta\restr{Y\cap Yg}}_-{\simeq}
& (L\restr{Y})\restr{Y\cap Yg}=L\restr{Y\cap Yg}\,.\kern-.5em
}}
\end{equation}
There is no choice at this step. By convention, we set
\begin{equation}
\label{eq:conv}%
u_L(g)=1 \qquadtext{if} P\cap P^g=1.
\end{equation}%
In the case $P\cap P^g\neq 1$, we are going to prove that $u_L(g):Y\cap Yg\to \bbC^*$ is a constant function. Taking~\eqref{eq:u-def} to the $m\ith$ tensor power, replacing both instances of~$\beta\potimes{m}$ by~$\omega$ thanks to~\eqref{eq:beta-omega} and using that $\omega$ is $G$-equivariant, we deduce that $(u_L(g))^m=1$ on~$Y\cap Yg$. Since this space is non-empty and connected (even contractible), this implies that the function~$u_L(g)$ is actually constant, with value equal some complex $m\ith$ root of unity~$u_L(g)\in \mu_m(\bbC)$. In other words, the function
\[
u_L:G\to \mu_m(\bbC)\ ,\qquad g\mapsto u_L(g)
\]
is a candidate to be a complex-valued weak $P$-homomorphism. It satisfies~(WH\,\ref{it:WH1}) by $P$-equivariance of~$\beta$, see~\eqref{eq:beta-P-equiv} and~\eqref{eq:u-def} for $g=h\in P$; and $u_L$ satisfies~(WH\,\ref{it:WH2}) by definition~\eqref{eq:conv}. To verify the last property~(WH\,\ref{it:WH3}), consider $g_1,g_2\in G$ such that $P\cap P^{g_1}\cap P^{g_2g_1}\neq 1$, \ie such that the subset $Z:=Y\cap Yg_1\cap Yg_2g_1$ is non-empty. Then juxtaposing the defining diagram~\eqref{eq:u-def} for~$u_L(g_1)$ and the one for $u_L(g_2)$ transported by~$(g_1)_*$, both suitably restricted to this triple intersection~$Z$, we obtain the following commutative diagram over~$Z$:
\begin{equation}
\vcenter{\xymatrix@C=10em@R=1.5em{
\unit_{Z}
 \ar[r]^-{({g_{1}}_*{g_{2}}_*(\beta))\restr{Z}}_-{\simeq}
 \ar[d]_-{{g_{1}}_*(\cdot u_L(g_2))=\cdot u_L(g_2)}^-{\simeq}
& L\restr{Z}
 \ar@{=}[d]
\\
\unit_{Z}
 \ar[r]^-{({g_{1}}_*\beta)\restr{Z}}_-{\simeq}
 \ar[d]_-{\cdot u_L(g_1)}^-{\simeq}
& L\restr{Z}
 \ar@{=}[d]
\\
\unit_{Z}
 \ar[r]^-{\beta\restr{Z}}_-{\simeq}
& L\restr{Z}\,.\kern-.5em
}}
\end{equation}
We used at the top left that ${g_1}_*(-)$ is $\bbC$-linear. Using now that ${g_1}_*{g_2}_*=(g_2g_1)_*$, the left-hand vertical composite satisfies the commutativity expected of~$u_L(g_2g_1)$, \ie fits in place of~$u_L(g_2g_1)$ in~\eqref{eq:u-def} for $g=g_2g_1$, after restriction of the latter to~$Z$. This is where we use that $Z\neq\varnothing$ to deduce that $u_L(g_2g_1)=u_L(g_2)\cdot u_L(g_1)$.

It is interesting to see the parallel of these arguments with those of~\cite{Balmer13b}, where the non-emptiness of $Z$ is replaced by the non-vanishing of a suitable stable category. Both properties are equivalent, namely they both are avatars of the fact that the Sylow~$P$ and its conjugates $P^{g_1}$ and $P^{g_2g_1}$ intersect non-trivially.
\end{Cons}

At this stage, we have associated a weak $P$-homomorphism $u_L\in\mACGP$ to an $m$-torsion $G$-equivariant line bundle~$L$ on~$\QG$ and choices of isomorphisms $\omega:\unit_{\QG}\isoto L\potimes{m}$ and $\beta:\unit_Y\isoto L\restr{Y}$ satisfying~\eqref{eq:beta-omega}. We now claim that \mbox{$\bbL(u_L)\simeq L$}. For this, recall the line bundle~$L_{u_L}$ of Construction~\ref{cons:L}, which describes~$\bbL(u_L)$. It comes with an isomorphism $\alpha_1:\unit_{Y}\isoto (L_{u_L})\restr{Y}$ satisfying
\[
g_*(\alpha_1)=\alpha_1\cdot u_L(g)\qquad\textrm{ over }Y\cap Yg
\]
by~\eqref{eq:g_*(alpha_1)}. Comparing this formula to the similar one for~$\beta$ in~\eqref{eq:beta-cocycle}, we see that the following isomorphism $\varphi:=\beta\circ\alpha_1\inv$ over~$Y$
\[
\xymatrix{
\varphi\,:\ (L_{u_L})\restr{Y} \ar[r]_-{\simeq}^-{\alpha_1\inv}
& \unit_{Y} \ar[r]_-{\simeq}^-{\beta}
& L\restr{Y}
}\]
satisfies $g_*(\varphi)=\varphi$ on~$Y\cap Yg$ for all~$g\in G$. Therefore, the $(g_*\varphi)_{g\in G}$ patch together into a morphism $\varphi:L_{u_L}\to L$ which is $G$-equivariant and an isomorphism by construction. This finishes the proof of the exactness of the sequence~\eqref{eq:main}.

\medbreak

It is immediate that $\bbL$ restricts to an isomorphism on prime-to-$p$ torsion since $\Hom_{\gps}(P,\bbC^*)$ is $p^r$-torsion, where $|P|=p^r$, hence every $L\in \Tors{m}{\Pic^G(\QG)}$ with $m$ prime to~$p$ maps to zero under~$\Res^G_P$.

This finishes the proof of Theorem~\ref{thm:main}.
\end{proof}

\begin{Rem}
Construction~\ref{cons:u} describes the inverse of~$\bbL$ on prime-to-$p$ torsion.
\end{Rem}

Let us now connect these results over~$\bbC$ to positive characteristic objects. We recall some well-known facts, to facilitate cognition.

\begin{Rem}
\label{rem:bar-k}%
The group~$\TGP$ is always finite. (Indeed, every endotrivial module in~$\TGP$ is a direct summand of~$\kk(G/P)$ -- an explicit projector depending on~$u\in\AGP$ is given in~\cite{Balmer13b}. By Krull-Schmidt it follows that $\TGP$ has at most~$\dim_{\kk}(\kk(G/P))=[G:P]$ elements.) Also, the order of~$\TGP$ is prime to~$p$; see~\cite[Cor.\,5.3]{Balmer13b}. For an algebraic closure~$\bar\kk$ of~$\kk$, one can easily identify the image of $\TGP \hookrightarrow \mathrm{T}_{\bar\kk}(G,P)$; see~\cite[Cor.\,5.5]{Balmer13b}.

In fact, the group $\TGP$ ``stabilizes" once \emph{$\kk$ contains all roots of unity} by which we mean it contains all $m\ith$ roots of unity for all integers $m\geq 1$ prime to~$p$. Here, ``stabilization" means that $\mathrm{T}_{\kk}(G,P)\to \mathrm{T}_{\kk'}(G,P)$ is an isomorphism for every further extension $\kk\to \kk'$; see~\cite[Cor.\,5.5]{Balmer13b}. This condition is of course fulfilled if the field~$\kk=\bar \kk$ is algebraically closed, or simply if $\kk$ contains $\bar\bbF_p$, the algebraic closure of the prime field. Our Theorem~\ref{thm:main-intro} is another way of seeing why $\TGP$ stabilizes once $\kk$ contains all roots of unity, by giving it a topological interpretation:
\end{Rem}

\begin{Cor}
\label{cor:main}%
The prime-to-$p$ torsion $\Tors{p'}{\Pic^G(\QG)}$ is a finite subgroup of~$\Pic^G(\QG)$. For any field~$\kk$ of characteristic~$p$ which contains all roots of unity (see Remark~\ref{rem:bar-k}), we have an isomorphism as announced in Theorem~\ref{thm:main-intro}
\[
\TGP \simeq \ \Tors{p'}{\Pic^G(\QG)}
\]
where $\torsname_{p'}{}$ denotes the prime-to-$p$ torsion subgroup.
\end{Cor}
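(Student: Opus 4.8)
The plan is to deduce Corollary~\ref{cor:main} from Theorem~\ref{thm:main} by combining the isomorphism~\eqref{eq:AT} from~\cite{Balmer13b} with the stabilization properties of~$\TGP$ recalled in Remark~\ref{rem:bar-k}. First I would settle the finiteness claim: since $\Pic^G(\QG)$ is finitely generated (Remark~\ref{rem:Burt}), its prime-to-$p$ torsion subgroup is automatically finite, being the torsion of a finitely generated abelian group intersected with a single coset of congruence conditions. Alternatively, by Theorem~\ref{thm:main} the group $\Tors{m}{\Pic^G(\QG)}$ is isomorphic to $\mACGP$ for each $m$ prime to~$p$, and $\ACGP$ is finite because $\TGP$ is (Remark~\ref{rem:bar-k}) and $\ACGP\simeq\mathrm{T}_{\bbC}(G,P)$ via~\eqref{eq:AT}; so the prime-to-$p$ torsion of $\Pic^G(\QG)$ is a union of an ascending chain of finite subgroups of bounded order, hence finite. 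I would present the argument through Remark~\ref{rem:Burt} as it is cleaner.

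Next I would assemble the chain of isomorphisms. Fix $\kk$ of characteristic~$p$ containing all roots of unity. By~\eqref{eq:AT} we have $\AGP\simeq\TGP$, and by the stabilization statement in Remark~\ref{rem:bar-k} the natural map $\TGP\to\mathrm{T}_{\bar\kk}(G,P)$ is an isomorphism, so it suffices to treat $\kk=\bar\kk$ algebraically closed; then one must compare $\AGP$ over $\bar\kk$ with $\ACGP$ over~$\bbC$. Since $\TGP$ has order prime to~$p$ (Remark~\ref{rem:bar-k}), every weak $P$-homomorphism $u\in\AGP$ takes values in roots of unity of order prime to~$p$, i.e.\ in $\mu_m(\kk)$ for some $m$ prime to~$p$; the same is true over~$\bbC$. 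As $\bar\bbF_p$ and~$\bbC$ both contain, for every such~$m$, a cyclic group $\mu_m$ of order~$m$, any choice of compatible isomorphisms $\mu_m(\bar\bbF_p)\isoto\mu_m(\bbC)$ (equivalently, a choice of field embedding of the ring of roots-of-unity, or an identification of the prime-to-$p$ parts of the two Tate modules) induces an isomorphism $\AGP\simeq\ACGP$: the three conditions (WH\,\ref{it:WH1})--(WH\,\ref{it:WH3}) are multiplicative and are preserved by any group isomorphism on the target. This is exactly the mechanism alluded to in Remark~\ref{rem:bar-k}; I would cite~\cite[Cor.\,5.5]{Balmer13b} for the precise statement that $\AGP$ is independent of the field once it contains enough roots of unity.

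Finally I would combine with Theorem~\ref{thm:main}: since $\AGP\simeq\ACGP$ is prime-to-$p$ torsion, it equals its own prime-to-$p$ torsion $\Tors{p'}{\ACGP}=\bigcup_{m,\,p\nmid m}\mACGP$, and $\bbL$ carries each $\mACGP$ isomorphically onto $\Tors{m}{\Pic^G(\QG)}$ by the last display of Theorem~\ref{thm:main}. Taking the union over all $m$ prime to~$p$ (the subgroups $\mACGP$ form a directed system under divisibility, and $\bbL$ is compatible with the inclusions), $\bbL$ yields an isomorphism
\[
\TGP\;\simeq\;\AGP\;\simeq\;\ACGP\;=\;\Tors{p'}{\ACGP}\;\xrightarrow[\ \sim\ ]{\ \bbL\ }\;\Tors{p'}{\Pic^G(\QG)}\,,
\]
which is the asserted isomorphism. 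I do not expect a genuine obstacle here; the one point needing care is the cross-characteristic identification $\AGP\simeq\ACGP$, which is not canonical but depends on a choice of roots of unity, and one should be explicit that this choice is harmless because both sides consist of functions valued in abstract prime-to-$p$ roots of unity. Everything else is bookkeeping on top of Theorem~\ref{thm:main}, \eqref{eq:AT}, and Remarks~\ref{rem:bar-k} and~\ref{rem:Burt}.
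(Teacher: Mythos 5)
Most of your plan tracks the paper's, but the step where you pass from $\AGP$ to $\ACGP$ contains a real gap. You justify it by saying ``the same is true over $\bbC$,'' i.e.\ that every $u\in\ACGP$ takes values in roots of unity of order prime to~$p$. Over~$\kk$ this indeed follows from the finiteness of $\AGP\simeq\TGP$ via~\eqref{eq:AT}, but over~$\bbC$ there is no such representation-theoretic input available: a complex-valued weak $P$-homomorphism is a priori just a function $G\to\bbC^*$ subject to~(WH\,\ref{it:WH1}-\ref{it:WH3}), and nothing in those axioms forces its values to be roots of unity. The paper's systematic use of $\Tors{}{\ACGP}$ and $\mACGP$ rather than $\ACGP$ throughout Theorem~\ref{thm:main} reflects precisely this point: $\ACGP$ is not known to be torsion. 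What your chosen identification $\mu_m(\kk)\simeq\mu_m(\bbC)$ actually yields is an isomorphism $\AGP\simeq\mACGP$ (for $m$ prime to~$p$ and divisible by the exponent~$e$ of~$\TGP$) --- this is exactly~\eqref{eq:A-2-A} --- and \emph{not} $\AGP\simeq\ACGP$. Once you replace $\ACGP$ by $\mACGP=\Tors{e}{\ACGP}=\Tors{p'}{\ACGP}$ in your final chain, the argument closes up correctly and coincides with the paper's proof.

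Two smaller remarks. Your detour through $\bar\kk$ is harmless but superfluous, since the paper's identification works directly for any~$\kk$ containing all roots of unity. Your finiteness argument via the finite generation of $\Pic^G(\QG)$ (Remark~\ref{rem:Burt}) is a valid alternative; the paper instead obtains finiteness of $\Tors{p'}{\Pic^G(\QG)}$ for free, as an immediate consequence of the isomorphism with the finite group~$\TGP$.
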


\begin{proof}
Let $\kk$ containing all roots of unity and let $e$ be the exponent of~$\TGP$. Let $m\geq 1$ be an integer, prime to~$p$ and divisible by~$e$.

By~\eqref{eq:AT}, the integer~$e$ is also the exponent of~$\AGP\simeq\TGP$ hence $u^m=1$ for all $u\in\AGP$. Thus every $u:G\to \kk^*$ in~$\AGP$ takes values in~$\mu_m(\kk)$. In other words, we can identify the group of \emph{$\kk$-valued} weak $P$-homomorphisms $\AGP$ with the set of functions $u:G\to \mu_m(\kk)$ satisfying~(WH\,\ref{it:WH1}-\ref{it:WH3}).

Consider now inside the group~ $\ACGP$ of \emph{complex-valued} weak $P$-homo\-mor\-phisms, the subgroup $\Tors{m}{\ACGP}$ of elements of order dividing~$m$. Again, this is just the subset of those functions $u:G\to \mu_m(\bbC)$ satisfying~(WH\,\ref{it:WH1}-\ref{it:WH3}).

\emph{Choose} now an isomorphism $\mu_m(\kk)\simeq\bbZ/m\simeq\mu_m(\bbC)$. This uses that $\kk$ contains all~$m\ith$ roots of unity. Combining the above we obtain an isomorphism
\begin{equation}
\label{eq:A-2-A}%
\AGP\simeq \mACGP.
\end{equation}
Since the left-hand side is independent of such~$m$ (prime to~$p$ and divisible by~$e$), we get $\Tors{p'}{\ACGP}=\Tors{e}{\ACGP}$. Using now Theorem~\ref{thm:main}, it follows that $\Tors{p'}{\Pic^G(\QG)}=\Tors{e}{\Pic^G(\QG)}\simeq\Tors{e}{\ACGP}$ via~$\bbL$. The latter is itself isomorphic to~$\AGP\simeq \TGP$ by a last instance of~\eqref{eq:A-2-A} and~\eqref{eq:AT}.
\end{proof}%

\begin{Rem}
The isomorphism of Corollary~\ref{cor:main} is essentially induced by the canonical homomorphism~$\bbL:\ACGP\to \Pic^G(\QG)$ of Section~\ref{sec:u-to-L}, up to the choice of an identification between $e\ith$ roots of unity in~$\kk$ and $e\ith$ roots of unity in~$\bbC$, for $e$ the exponent of~$\TGP$. Another choice of an isomorphism $\mu_e(\kk)\simeq\mu_e(\bbC)$ simply changes the isomorphism~\eqref{eq:A-2-A} by multiplication with some integer prime to~$e$, a rather harmless operation which is of course invertible.
\end{Rem}

Combining the above with Example~\ref{exa:trivial}, we obtain:
\begin{Cor}
The following properties of~$G$ and~$p$ are equivalent:
\begin{enumerate}[(i)]
\item
For $\kk=\bar\bbF_p$ the group~$\TGP$ consists only of one-dimensional representations $G\to \kk^*$.
\item[(i')]
For every field $\kk$ containing all roots of unity, the group~$\TGP$ consists only of one-dimensional representations $G\to \kk^*$.
\smallbreak
\item
Every $G$-equivariant complex line bundle on~$\QG$ which is torsion of order prime to $p$ is constant, \ie $\Tors{p'}{\Pic^G(*)}\to \Tors{p'}{\Pic^G(\QG)}$ is onto.
\qed
\end{enumerate}
\end{Cor}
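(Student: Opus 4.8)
The plan is to chase both implications through the correspondences already in hand: $\TGP\simeq\AGP$ by~\eqref{eq:AT}, the stabilization of~$\TGP$ from Remark~\ref{rem:bar-k} (which makes (i) and (i') equivalent essentially for free, since taking $\kk=\bar\bbF_p$ contains all roots of unity), and the isomorphism $\TGP\simeq\Tors{p'}{\Pic^G(\QG)}$ of Corollary~\ref{cor:main} realized by~$\bbL$ (up to a harmless identification $\mu_e(\kk)\simeq\mu_e(\bbC)$). The extra ingredient is Example~\ref{exa:trivial}, which identifies, on the complex side, exactly which weak $P$-homomorphisms give \emph{constant} line bundles: those of the form~$\tilde u$ for a genuine one-dimensional representation $u\colon G\to\bbC^*$ trivial on~$P$. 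So the corollary amounts to matching ``$u\in\AGP$ is (restriction of) a homomorphism $G\to\kk^*$'' with ``$\bbL(u)$ is constant.''

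First I would observe that (i') $\Rightarrow$ (i) is trivial and (i) $\Rightarrow$ (i') follows from the stabilization statement in Remark~\ref{rem:bar-k}: if every element of $\mathrm T_{\bar\bbF_p}(G,P)$ is one-dimensional, then since $\mathrm T_{\kk}(G,P)\to\mathrm T_{\kk'}(G,P)$ is an isomorphism once the field contains all roots of unity, and since one-dimensionality is detected on the level of the group $\AGP$ of weak homomorphisms (a one-dimensional representation corresponds precisely to a weak $P$-homomorphism that happens to be multiplicative, \ie of the form $\tilde u$ for a homomorphism $u$ trivial on $P$ — this is the content of Example~\ref{exa:trivial} transported to positive characteristic via~\cite{Balmer13b}), the property passes to every such $\kk$. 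Concretely: under~\eqref{eq:AT} the subgroup of one-dimensional endotrivials corresponds to the subgroup $\{\tilde u\mid u\in\Hom_{\gps}(G/P',\kk^*)\}\subseteq\AGP$, and this identification is compatible with field extensions containing all roots of unity.

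Next I would prove (i') $\Leftrightarrow$ (ii). Fix $\kk$ containing all roots of unity; then via Corollary~\ref{cor:main}, $\TGP\simeq\Tors{p'}{\Pic^G(\QG)}$ through $\bbL$ composed with the identification $\mu_e(\kk)\simeq\mu_e(\bbC)$. On the complex side, $\Tors{p'}{\ACGP}$ is identified (Corollary~\ref{cor:main}'s proof) with $\Tors{m}{\ACGP}$ for suitable $m$, the functions $G\to\mu_m(\bbC)$ satisfying (WH\,1--3). Now I claim the constant line bundles in $\Tors{p'}{\Pic^G(\QG)}$ — \ie the image of $\Tors{p'}{\Pic^G(*)}=\Tors{p'}{\Hom_{\gps}(G,\bbC^*)}$ — correspond under $\bbL^{-1}$ exactly to the weak $P$-homomorphisms of the form $\tilde u$. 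One inclusion is Example~\ref{exa:trivial}: $\bbL(\tilde u)=\unit_u$ is constant. For the converse, if $\bbL(u)$ is constant, say $\bbL(u)\simeq\unit_\chi$ for a torsion $\chi\in\Hom_{\gps}(G,\bbC^*)$, then applying $\Res^G_P$ and using that $\unit_\chi$ restricts to the $P$-representation $\chi|_P$ while $\Res^G_P\circ\bbL=1$ by Theorem~\ref{thm:main}, we get $\chi|_P=1$; so $\chi$ is trivial on $P$ and $\bbL(\tilde\chi)=\unit_\chi\simeq\bbL(u)$, whence $u=\tilde\chi$ by injectivity of $\bbL$ on torsion (Theorem~\ref{thm:main}). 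Therefore ``every prime-to-$p$ torsion equivariant line bundle is constant'' translates into ``every $u\in\Tors{p'}{\ACGP}$ is some $\tilde\chi$,'' which under~\eqref{eq:A-2-A} and~\eqref{eq:AT} is exactly the statement that $\TGP$ consists of one-dimensional representations over any $\kk$ containing all roots of unity — and the ``$\Tors{p'}{\Pic^G(*)}\to\Tors{p'}{\Pic^G(\QG)}$ is onto'' phrasing is just the restatement of ``every such bundle is constant.''

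I expect the only genuine subtlety to be bookkeeping the identification of one-dimensional endotrivial representations with the ``multiplicative'' weak $P$-homomorphisms $\tilde u$ on the positive-characteristic side, \ie making precise that the isomorphism~\eqref{eq:AT} of~\cite{Balmer13b} carries $\Hom_{\gps}(G,\kk^*)\cap\TGP$ (the one-dimensional endotrivials) onto $\{\tilde u\}\subseteq\AGP$ — this should be essentially the defining property of that isomorphism, but it is the one external fact the argument leans on beyond what is stated in the excerpt, so I would cite~\cite{Balmer13b} for it. Everything else is a formal diagram-and-torsion chase through the already-established square $\Res^G_P\circ\bbL=1$ and the injectivity of $\bbL$ on torsion. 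I would present the proof as: (a) reduce (i) $\Leftrightarrow$ (i') to stabilization; (b) translate (i') into a statement purely about $\Tors{p'}{\ACGP}$ via Corollary~\ref{cor:main}; (c) use Example~\ref{exa:trivial} plus Theorem~\ref{thm:main} (the exact sequence, giving $\Res^G_P\circ\bbL=1$ and injectivity) to identify the constant bundles with the $\tilde u$'s; (d) conclude.
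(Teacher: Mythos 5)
Your proposal is correct and takes essentially the same route the paper intends: the corollary carries no written proof (the \qed is immediate after the statement, following ``Combining the above with Example~\ref{exa:trivial}, we obtain''), and your argument is precisely the careful unwinding of Corollary~\ref{cor:main}, Theorem~\ref{thm:main}, and Example~\ref{exa:trivial} that this shorthand presupposes. You are also right to single out, as the one external ingredient, the compatibility of the isomorphism~\eqref{eq:AT} of~\cite{Balmer13b} with the passage $u\mapsto\tilde u$ from one-dimensional representations trivial on~$P$ to weak $P$-homomorphisms — that is indeed where the argument touches ground outside the present paper.
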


\bigbreak
\textbf{Acknowledgments}\,: This paper is a satellite of a project with Serge Bouc, based on his insight on how ``sipp cohomology" in the sense of~\cite{Balmer15} might be connected to the Brown complex. I am extremely grateful to him for numerous detailed discussions, both at philosophical and technical levels. The idea for the present article originated in this interaction. I thank Burt Totaro for several useful discussions and for suggesting Remark~\ref{rem:Burt}. I also thank Ivo Dell'Ambrogio and Jacques Th\'evenaz for valuable comments. Finally, I thank Henning Krause and Bielefeld University for their hospitality during the preparation of this work.


\end{document}